\theoremstyle{plain}
\newcommand{\E}{\mathbb E}
\newcommand{\F}{\mathcal F}
\newcommand{\D}{\mathcal D}
\newcommand{\C}{\mathcal C}
\def\P{{\mathbb P}}
\newtheorem{theorem}{Theorem}[section]
\newtheorem{proposition}[theorem]{Proposition}
\newtheorem{assumption}[theorem]{Assumption}
\newtheorem{remark}[theorem]{Remark}
\title{Multi-dimensional sequential testing and detection}
\author{Erik Ekstr\"om and Yuqiong Wang}
\address{Department of Mathematics, Uppsala University, Box 480, 75106 Uppsala, Sweden.}
\keywords{Sequential analysis; optimal stopping; Bayesian quickest detection problem}
\date{\today}
\begin{document}

\begin{abstract}
We study extensions to higher dimensions of the classical Bayesian sequential testing and detection problems for Brownian motion.
In the main result we show that, for a large class of problem formulations, the cost function is unilaterally concave. 
This concavity result is then used to deduce structural properties for the continuation and stopping regions in specific examples.
\end{abstract}

\maketitle

\section{Introduction}

In the seminal paper \cite{S}, two sequential problems of determining an unknown drift of a one-dimensional Wiener process were solved. To describe these problems, let $Y$ be a continuous-time Markov chain with state space $\{0,1\}$ and
transition rate matrix 
\[Q=\begin{pmatrix}
-\lambda & \lambda \\
0 & 0
\end{pmatrix}\]
where $\lambda\geq 0$ is a known constant,
and with random starting point such that $\P(Y_0=1)=\pi$ and $\P(Y_0=0)=1-\pi$ for $\pi\in[0,1]$.
Moreover, let $X$ be a stochastic process given by
\[X_t=\mu \int_0^t Y_s\,ds +W_t,\]
where $\mu\not=0$ and $W$ is a one-dimensional standard Brownian motion independent of $Y$. 
In \cite{S}, the problem of sequential testing between two hypotheses and the problem of quickest detection of a drift change are solved.

{\bf Sequential testing:} In this problem, $\lambda=0$ so that $Y_t=Y_0$ for all $t\geq 0$, and one seeks to determine the 
drift $ \mu Y_0$ as accurately as possible but also as quickly as possible.
More precisely, 
for constants $a,b,c\in(0,\infty)$, \cite{S} considers the problem 
\begin{equation}\label{1dst}
\inf_{\tau,d}\left\{a\P(d=0, Y_0=1) + b\P(d=1, Y_0=0) + c\E[\tau]\right\},
\end{equation}
where the infimum is taken over $\mathcal F^X$-stopping times $\tau$ and decisions $d\in\{0,1\}$ such that $d$ is 
$\mathcal F^X_\tau$-measurable. 

{\bf Quickest detection:} In this problem, $\lambda>0$, and one seeks to detect the jump-time of $Y$ as quickly as possible. More precisely, for $b,c\in(0,\infty)$, \cite{S} considers the problem
\begin{equation}\label{1dqd}
\inf_{\tau}\left\{ b\P(Y_\tau=0) + c\E\left[\int_0^\tau Y_t\,dt\right]\right\},
\end{equation}
where the infimum is taken over $\mathcal F^X$-stopping times $\tau$.

By standard methods, both problems \eqref{1dst} and \eqref{1dqd} can be reduced to optimal
stopping problems of the form
\begin{equation}
\label{1d-form}
\inf_\tau\E\left[g(\Pi_\tau) + \int_0^\tau h(\Pi_s)\,ds\right]
\end{equation}
written in terms of the conditional probability process 
\[\Pi_t:=\E[Y_t\vert\mathcal F_t],\] 
where $g$ and $h$ are certain penalty function (for the sequential testing problem, 
$g(\pi)=a\pi\wedge b(1-\pi)$ and $h(\pi)=c$, whereas in the detection problem, $g(\pi)=b(1-\pi)$ and $h(\pi)=c\pi$). Moreover, it is well-known that the conditional probability process $\Pi$ is a Markov process with generator
\[\frac{1}{2}\pi^2(1-\pi)^2\partial_\pi^2 +\lambda(1-\pi)\partial_\pi.\]
In \cite{S}, problems \eqref{1dst} and \eqref{1dqd} are solved separately using a 
'guess and verify' approach involving an associated free-boundary problem for the cost function. 

The two examples above are the generic formulations in the one-dimensional case, and there is a rich literature on various extensions. For example, testing and detection problems for a Poisson process with
unknown intensity have been studied in \cite{PS1} and \cite{PS2}, and a multi-source variant has been studied in \cite{DPS}.
The references \cite{EV} and \cite{EV2} treat some aspects of testing and detection problems with 
general distributions of the random drift; since the penalties for a wrong decision in \cite{EV} and \cite{EV2} are binary, the sufficient statistic is a one-dimensional, but time-inhomogeneous, Markov process.
Formulations allowing for non-binary penalties appear in \cite{BDK}, \cite{MS} and \cite{ZS}, in which the natural sufficient statistic is two-dimensional and the analysis thus becomes more involved.

In the literature cited above, the observation process is one-dimensional; the existing literature on 
multi-dimensional versions is sparser. In \cite{EPZ}, a three-dimensional Brownian motion is observed for which
exactly one coordinate has non-zero drift, and the problem of determining this coordinate as quickly as possible is studied.
In the set-up of \cite{EPZ}, the three random drifts are heavily dependent; in fact, 
if one drift is non-zero then the remaining two drifts have to be zero.
In \cite{BP} a less constrained set-up is used, in which two Poisson processes change intensity at two independent exponential times, and the problem of detecting the minimum of these two times is considered.

In the current article, we use a similar unconstrained set-up as in \cite{BP} to study sequential testing and detection 
problems for a multi-dimensional Wiener process.
The variety of possible versions of such testing and detection problems is very rich;
indeed, in some applications it would be natural to 
seek to determine {\em all} drifts as accurately as possible, whereas it would be more natural in other applications to 
determine only {\em one} of all possible drifts. Similarly, in the quickest detection problem some applications would 
suggest to look for the {\em smallest} change-point (as in \cite{BP}), whereas one in other applications would try to detect
the {\em last} change-point; further variants are listed below. 
Rather than studying all different formulations on a case by case basis, 
the multitude of multi-dimensional formulations motivates a unified treatment of the corresponding stopping problems.
It turns out that a large class of such problems can be written in the form \eqref{1d-form} (or rather, a multi-dimensional version of \eqref{1d-form}), with $g$ and $h$ both unilaterally concave (concave in each variable separately). In our main result we 
show that unilateral concavity of the penalty functions is preserved in the sense that also the corresponding cost function is unilaterally concave. Since many multi-dimensional penalty functions are unilaterally piecewise affine, the concavity property provides valuable information about the structure of the corresponding continuation and stopping regions.

There is related literature on preservation of spatial concavity/convexity (and consequences for volatility mis-specification) for martingale diffusions within the mathematical finance literature, 
see for example \cite{EJS}, \cite{H} and \cite{JT} for one-dimensional results. In higher dimensions, preservation of concavity is a rather rare property, compare \cite{JT2} and \cite{EJT}. With this in mind, we point out that preservation of {\em unilateral} concavity is a weaker property; however, it is of less financial importance, and has therefore been less studied in the financial literature. 
Also note that for the multi-dimensional version of \eqref{1d-form}, the natural choices of $g$ and $h$ are typically not concave, but only unilaterally concave. We also remark that the authors of \cite{BP} use a three-dimensional embedding of a detection problem in order to obtain concavity of the value function; for {\em unilateral} concavity, however, one may remain in the two-dimensional set-up of the problem.

The paper is organised as follows.
In Section~\ref{2} we specify the multi-dimensional versions of the sequential 
testing and quickest detection problems, and we provide a list of natural examples. In Section~\ref{3} we provide our unilateral concavity result for the multi-dimensional problem, and in Sections~\ref{4}-\ref{5} we use the unilateral concavity to derive structural properties of
continuation regions for the specific examples.

\section{The multi-dimensional set-up}
\label{2}

We consider a problem where one continuously observes an $n$-dimensional process $X$, and where the drift of each component $X^i$ is modeled using a 
continuous time Markov chain $Y^i$ with state space $\{0,1\}$ and transition rate matrix
\[Q^i=\begin{pmatrix}
-\lambda_i & \lambda_i \\
0 & 0
\end{pmatrix},\]
with $\lambda_i\geq 0$. Moreover, the initial condition satisfies 
$\mathbb P(Y_0=1)=\pi_i\in[0,1]$.
The observation process $(X_t)_{t \geq 0}=(X_t^1,X_t^2,\dots, X_t^n)_{t \geq 0}$ is assumed to be given by
\[d X_t^i=\mu_i Y^i_t dt+dW_t^i.\]
Here $\mu_i>0$, $i=1,...,n$ are known constants and 
$W^i,\dots, W^n$ are one-dimensional standard Brownian motions such that $Y^1,...,Y^n,W^1,...,W^n$ are independent.
In parallel to the one-dimensional case, we introduce the multi-dimensional posterior probability process $\Pi=(\Pi^1, \dots\Pi^n)$ by
\[\Pi^i_t:=\E[Y^i_t\vert\mathcal F_t^X].\]
By our independence assumption we note that $\Pi^i_t=\E[Y^i_t\vert\mathcal F_t^{X^i}]$, and, in particular, 
that the coordinates of $\Pi$ are independent.

Below we list a few natural formulations of multi-dimensional sequential testing problems and multi-dimensional detection problems.
These examples can be written as stopping problems of the form
\begin{equation}
\label{multid}
\inf_\tau\E\left[g(\Pi_\tau) + \int_0^\tau h(\Pi_s)\,ds\right]
\end{equation}
as in the one-dimensional case, but 
with $g,h:[0,1]^n\to[0,\infty)$ now being functions of the multi-dimensional process $\Pi$.

{\bf Sequential testing.} 
Assume that $\lambda_i=0$, $i=1,...,n$ and that the penalisation in time is linear, i.e. of the type
$c\E[\tau]$ for some constant $c>0$. All formulations below can then be written on the form \eqref{multid} with $h=c$ 
but with different penalty functions $g$. For simplicity we consider symmetric penalization (corresponding to $a=b=1$ in \eqref{1dst}); generalizations to set-ups with non-symmetric weights 
are straightforward.

%Given a penalty
%function $G:\{0,1\}^{2n}\to[0,\infty)$, consider the problem 
%\begin{equation}\label{multidim}
%\inf_{\tau,d}\left\{\E\left[G(\mathds 1_{\{d_1=0,Y^1_0=1\}},\mathds 1_{\{d_1=1,Y^1_0=0\}}\dots,\mathds 1_{\{d_n=0,Y^n_0=1\}},\mathds 1_{\{d_n=1,Y^n_0=0\}})+ c\tau\right]\right\},
%\end{equation}
%where the infimum is taken over $\mathcal F^X$-stopping times $\tau$ and decisions $d\in\{0,1\}^n$ such that $d$ is $\mathcal F^X_\tau$-measurable. 

\begin{itemize}
\item[(ST1)]
Consider the problem
\begin{equation}\notag
\inf_{\tau,d}\left\{\sum_{i=1}^n \mathbb P(d_i\not=Y^i_0)+ c\E\left[\tau\right]\right\},
\end{equation}
where the infimum is taken over $\mathcal F^X$-stopping times $\tau$ and decisions $d\in\{0,1\}^n$ such that $d$ is $\mathcal F^X_\tau$-measurable, i.e. 
the tester is penalised equally for every faulty decision. This problem can be written on the form \eqref{multid}
with  
\[g(\pi)=\sum_{i=1}^n \pi_i\wedge (1-\pi_i).\]
%\item[(ST2)]
%Consider the problem
%\begin{equation}
%\inf_{\tau,d}\left\{ \mathbb P(\{d_i\not=Y^i_0\mbox{ for some }i\})+ c\E\left[\tau\right]\right\},
%\end{equation}
%where again the infimum is taken over $\mathcal F^X$-stopping times $\tau$ and decisions $d\in\{0,1\}^n$ such that $d$ is $\mathcal F^X_\tau$-measurable. 
%This formulation describes a problem where the tester is penalised 
%in case at least {\em one} decision is faulty; this problem can be reduced to the form \eqref{multid} with 
%\[g(\pi)=1-\Pi_{i=1}^n(1-\pi_i\wedge(1-\pi_i)).\]
\item[(ST2)]
Consider the problem 
\[\inf_{\tau,d,\tilde d}\left\{\P(d\not=Y^{\tilde d}_0) + c\E[\tau]\right\},\]
where the infimum is taken over $\mathcal F^X$-stopping times $\tau$ and decisions $d\in\{0,1\}$ and 
$\tilde d\in\{1,\dots,n\}$ that are $\mathcal F_\tau^X$-measurable. Thus the tester seeks to determine as quickly as possible a drift for only one of the processes. For this problem, the penalty function is given by
\[g(\pi)=\bigwedge_{i=1}^n(\pi_i\wedge(1-\pi_i)).\]
\item[(ST3)] 
Let for simplicity $n=2$ (generalizations are straightforward), let $\mu_1=\mu_2=:\mu$ and let $\gamma\in[0,1]$ be a given constant. Consider the problem 
\[\inf_{\tau_1,\tau_2,d_1,d_2}\left\{ \P(d_1\not=Y^1_{0}) + \P(d_2\not= Y^2_0)+ 
c\E[\tau_1\wedge\tau_2+(1-\gamma)(\tau_1\vee\tau_2-\tau_1\wedge\tau_2)]\right\},\]
where the infimum is taken over stopping times $\tau_1$, $\tau_2$ and decisions $d_1,d_2\in\{0,1\}$ such that 
$d_i$ is $\mathcal F_{\tau_i}^X$-measurable.
Here $\gamma$ is a cost reduction parameter which describes how the cost (per unit of time) of observing two processes relates to the cost of observing only one process. Using the strong Markov property, this multiple stopping problem reduces to a problem of type \eqref{multid} with penalty
\[g(\pi_1,\pi_2)=\bigwedge_{i=1}^2 \left(\pi_i\wedge(1-\pi_i) + u^{\mu,c(1-\gamma)}(\pi_{3-i})\right),\]
where $u^{\mu,c(1-\gamma)}$ is the value function of the one-dimensional sequential testing problem with cost $c(1-\gamma)$ per unit of time, see \eqref{1d} below.
\end{itemize}

{\bf Quickest detection.}
Now assume that $\lambda_i>0$ for all $i$, and let $c>0$ be a constant.
In all of the formulations below, the infimum is taken over $\mathcal F^X$-stopping times.

\begin{itemize}
\item[(QD1)]
Consider the problem
\[\inf_{\tau}\left\{\mathbb P\left(\max_{1\leq i\leq n}Y^i_\tau =0\right) + c \E\left[\int_0^\tau \max_{1\leq i\leq n}Y^i_t\,dt\right]\right\}.\]
Here one seeks to determine the {\em first} change-point (this problem formulation was treated in \cite{BP} for a detection problem involving two Poisson processes);
the problem can be written on the form \eqref{multid} with 
\[g(\pi)=\Pi_{i=1}^n(1-\pi_i)\] 
and 
\[h(\pi)=c(1-\Pi_{i=1}^n(1-\pi_i)).\]
\item[(QD2)]
A problem of determining the {\em last} change-point is obtained by instead considering
\[\inf_{\tau}\left\{\mathbb P\left(\min_{1\leq i\leq n}Y^i_\tau =0\right) + c \E\left[\int_0^\tau \min_{1\leq i\leq n}Y^i_t\,dt\right]\right\}.\]
Again, this problem can be written on the form \eqref{multid}; the corresponding functions $g$ and $h$ are given by 
\[g(\pi)=1-\Pi_{i=1}^n\pi_i\] 
and 
\[h(\pi)=c\Pi_{i=1}^n\pi_i,\]
respectively.
\item[(QD3)]
Assume that a tester wants to detect one coordinate for which the change-point has happened. One possible formulation of this is
\[\inf_{\tau,\tilde d}\left\{\P(Y^{\tilde d}_\tau=0) + c\E\left[\int_0^\tau \sum_{i=1}^n Y^i_t  \,dt\right]\right\},\]
where the infimum is taken also over $\F^X_\tau$-measurable decisions $\tilde d\in\{1,...,n\}$.
The problem can be written on the form \eqref{multid} where the
corresponding functions $g$ and $h$ are given by
\[g(\pi)=\bigwedge_{i=1}^n(1-\pi_i)\] 
and 
\[h(\pi)=c\sum_{i=1}^n\pi_i.\]
\end{itemize}

Now consider the stopping problem \eqref{multid} for given functions $g$ and $h$.
Throughout the remainder of this article we make the following assumption.

\begin{assumption}
\label{ass}
We assume that
\begin{itemize}
\item
the functions $g,h:[0,1]^n\to[0,\infty)$ are Lipschitz continuous;
\item
the functions $g(\pi)$ and $h(\pi)$ are concave in each variable $\pi_i$ separately;
\item
if $\lambda_i=0$ for some $i=1,\dots,n$, then $h$ is constant.
\end{itemize}
\end{assumption}

\begin{remark}
Note that all examples (ST1)-(ST4) and (QD1)-(QD3) are covered by Assumption~\ref{ass}. 
Also note that the assumption of unilateral concavity is strictly weaker than (joint) concavity. In fact, 
$g$ and $h$ are {\em not} concave in (QD1)-(QD2).
\end{remark}

It is well-known that the $\Pi$ process satisfies
\begin{equation}
\label{Pi}
d \Pi_t^i=\lambda_i (1- \Pi_t^i) dt+\mu_i \Pi_t^i (1- \Pi_t^i)d \bar{W}_t^i
\end{equation}
for $i=1,\dots, n$, where the innovation process $\bar{W}=(\bar{W}^1,...,\bar{W}^n)$ defined by 
\[\bar W^i_t=X^i_t-\mu_i\int_0^t\Pi^i_s\,ds\]
is an $n$-dimensional Brownian motion with independent coordinates.
Consequently, $\Pi$ is an $n$-dimensional time-homogeneous Markov process with independent coordinates; allowing for an arbitrary starting point $\pi\in[0,1]^n$, we define a cost function $V:[0,1]^n\to[0,\infty)$ by
\begin{equation}
\label{V}
V(\pi)=\inf_{\tau}\mathbb{E}_{\pi}\left[g(\Pi_{\tau})+\int_0^{\tau}h(\Pi_s)ds\right].
\end{equation}
We also introduce the continuation region 
\[\C:=\{\pi\in[0,1]^n: V(\pi)< g(\pi)\}\]
and its complement, the stopping region $\D=[0,1]^n\setminus\C$, 
and we recall from optimal stopping theory that the stopping time 
\[\tau^*=\inf\{t \geq 0: \Pi_t \in\D\}\]
is optimal in \eqref{V}.

We end this section with a short review of the one-dimensional problems.

\subsection{The one-dimensional case}

\subsubsection{Sequential testing}
With the notation of the introduction, let 
\begin{equation}
\label{1d}
u(\pi):=u^{\mu,c}(\pi):=\inf_\tau\E_\pi[\Pi_\tau\wedge(1-\Pi_\tau) +c\tau].
\end{equation}
The notation $u^{\mu,c}$ is used when we want to emphasize the dependence on the drift $\mu$ and the cost of observation parameter $c$, and we refer to this one-dimensional testing problem as $ST(\mu,c)$.
We then know that $u:[0,1]\to[0,1]$ is concave with $u(\pi)\leq\pi\wedge(1-\pi)$. Moreover, 
\[\C:=\{\pi\in[0,1]:u(\pi)<\pi\wedge(1-\pi)\} = (A^*,1-A^*)\]
for some $A^*\in(0,1/2)$; further details on $u$ and $A^*$ can be found in \cite{PS}. 

\subsubsection{Quickest detection}
Again with the notation of the introduction, let
\[u(\pi):=u^{\mu,\lambda,c}(\pi_):=\inf_\tau\E_\pi\left[1-\Pi_\tau+c\int^\tau_0\Pi_t\,dt\right],\]
where the notation $u^{\mu,\lambda,c}$ is used when we want to emphasize the dependence on the parameters $\mu$, $\lambda$ and $c$, and we refer to this one-dimensional detection problem as $QD(\mu,\lambda,c)$.
The function $u:[0,1]\to[0,1]$ is then concave and non-increasing. Moreover, 
\[\C:=\{\pi\in[0,1]:u(\pi)<1-\pi\} = [0,B^*)\]
for some $B^*\in (0,1)$; again, further details on $u$ and $B^*$ can be found in \cite{PS}.

\section{Properties of the cost function}
\label{3}

In this section we derive Lipschitz continuity and unilateral concavity for the multi-dimensional stopping problem
\eqref{multid}.

\subsection{Continuity}

\begin{theorem}
The cost function $V:[0,1]^n\to[0,\infty)$ is Lipschitz continuous.
\end{theorem}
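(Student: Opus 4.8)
The plan is to establish a bound of the form $|V(\pi)-V(\pi')|\le K\sum_{i=1}^n|\pi_i-\pi_i'|$ by a coupling argument carried out on a single probability space. Since $\Pi$ is the strong solution of \eqref{Pi} driven by the innovation Brownian motion $\bar W$, and since $\mathcal F^X=\mathcal F^{\bar W}$, the value \eqref{V} is unchanged if the infimum is taken over $\mathcal F^{\bar W}$-stopping times; I would therefore fix one $n$-dimensional Brownian motion $\bar W$ and, for each starting point, let $\Pi^{\pi}$ denote the corresponding strong solution of \eqref{Pi}, so that a $\mathcal F^{\bar W}$-stopping time $\tau$ is admissible simultaneously for every starting point. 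For a fixed such $\tau$, writing $L$ for a common Lipschitz constant of $g$ and $h$, the Lipschitz assumption gives
\[\Big|\E\big[g(\Pi^{\pi}_\tau)\big]-\E\big[g(\Pi^{\pi'}_\tau)\big]\Big|\le L\sum_{i=1}^n\E\big[|\Pi^{i,\pi}_\tau-\Pi^{i,\pi'}_\tau|\big],\]
and similarly for the running cost, so everything reduces to controlling $\E|\Pi^{i,\pi}_t-\Pi^{i,\pi'}_t|$ uniformly in $t$ and at stopping times.

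The key step, and the whole point of the argument, is the following observation about the coordinate difference $D^i_t:=\Pi^{i,\pi'}_t-\Pi^{i,\pi}_t$. Subtracting the two copies of \eqref{Pi} and using the algebraic identity $p(1-p)-q(1-q)=(p-q)(1-p-q)$, one finds that $D^i$ solves the \emph{linear} equation
\[dD^i_t=-\lambda_i D^i_t\,dt+\mu_i\big(1-\Pi^{i,\pi}_t-\Pi^{i,\pi'}_t\big)D^i_t\,d\bar W^i_t,\qquad D^i_0=\pi_i'-\pi_i.\]
Hence $D^i_t=(\pi_i'-\pi_i)\,e^{-\lambda_i t}\,\mathcal E^i_t$, where $\mathcal E^i$ is the stochastic exponential of $\int_0^\cdot\mu_i(1-\Pi^{i,\pi}_s-\Pi^{i,\pi'}_s)\,d\bar W^i_s$. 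The integrand being bounded by $\mu_i$, Novikov's condition holds, $\mathcal E^i$ is a true martingale with $\E[\mathcal E^i_t]=1$, and $\mathcal E^i_t>0$, so that $|D^i_t|=|\pi_i-\pi_i'|\,e^{-\lambda_i t}\mathcal E^i_t$ and
\[\E\big[|\Pi^{i,\pi}_t-\Pi^{i,\pi'}_t|\big]=|\pi_i-\pi_i'|\,e^{-\lambda_i t}\le|\pi_i-\pi_i'|.\]
Moreover $e^{\lambda_i t}|D^i_t|$ is a martingale, so $|D^i_t|$ is a non-negative supermartingale, and optional sampling yields $\E|D^i_\tau|\le|\pi_i-\pi_i'|$ for every stopping time $\tau$ (applied to $\tau\wedge T$ with $T\to\infty$ and Fatou if $\tau$ may be infinite). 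This non-expansive, uniform-in-time control is exactly what the bounded and multiplicative form of the diffusion coefficient buys, and it is what the rest of the proof rests on.

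It then remains to assemble the two contributions. The terminal term is bounded by $L\sum_i\E|D^i_\tau|\le L\sum_i|\pi_i-\pi_i'|$ directly. For the running term there are two cases matching Assumption~\ref{ass}. If $\lambda_i=0$ for some $i$, then $h$ is constant, so $\int_0^\tau h(\Pi^{\pi}_s)\,ds=\int_0^\tau h(\Pi^{\pi'}_s)\,ds$ for the common $\tau$ and this contribution cancels. If instead $\lambda_i>0$ for all $i$, then Tonelli together with the exponential decay above gives
\[\E\left[\int_0^\tau\big|h(\Pi^{\pi}_s)-h(\Pi^{\pi'}_s)\big|\,ds\right]\le L\sum_{i=1}^n\int_0^\infty|\pi_i-\pi_i'|\,e^{-\lambda_i s}\,ds=L\sum_{i=1}^n\frac{|\pi_i-\pi_i'|}{\lambda_i},\]
which is finite. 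In either case the cost functionals at $\pi$ and $\pi'$ differ by at most $K\sum_i|\pi_i-\pi_i'|$, with $K$ depending only on $L$ and $\min_i\lambda_i$. Taking an $\varepsilon$-optimal $\mathcal F^{\bar W}$-stopping time for one starting point and using it for the other, then interchanging the roles of $\pi$ and $\pi'$ and letting $\varepsilon\downarrow0$, yields the Lipschitz bound. The only genuinely delicate point is the uniform-in-time control of $\E|D^i_t|$: the explicit linear-SDE representation resolves it, whereas a naive Gronwall estimate would produce only a useless exponentially growing bound.
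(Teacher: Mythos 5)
Your proof is correct and follows essentially the same route as the paper's: couple the two solutions through a common driving Brownian motion, show the coordinate difference is a nonnegative bounded supermartingale with mean decaying like $e^{-\lambda_i t}$, and then apply optional sampling to the terminal term and Tonelli to the running term (using constancy of $h$ when some $\lambda_i=0$). The only cosmetic difference is that you obtain the sign-definiteness and the supermartingale property from the explicit stochastic-exponential solution of the linear SDE for the difference, where the paper instead invokes a one-dimensional comparison theorem and reads off the drift of the difference directly.
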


\begin{proof}
It suffices to check that $V$ is Lipschitz in each variable $\pi_i$ separately. To do that, let $i=1$ and
denote by $\Pi_t$ the solution of \eqref{Pi} with initial condition $\Pi_t=\pi\in[0,1]^n$, and denote by 
$\tilde\Pi_t$ the solution with initial condition $\tilde\pi=(\tilde\pi_1,...,\tilde\pi_n)$, where $\pi_j=\tilde\pi_j$, $j=2,\dots,n$ and $\pi_1<\tilde\pi_1$. By a comparison result for one-dimensional 
stochastic differential equations, $\Pi_t^1\leq\tilde\Pi_t^1$ for all $t\geq 0$.
Moreover,  
\[d(\tilde\Pi^1_t-\Pi^1_t) = -\lambda_1 (\tilde\Pi^1_t-\Pi^1_t)\,dt + dM_t\]
where $M$ is a continuous martingale, so $\tilde\Pi^1_t-\Pi^1_t$ is a bounded 
supermartingale. Consequently, by optional sampling, 
\[0\leq \E[ \tilde\Pi^1_{\tau}-\Pi^1_{\tau}]\leq \tilde\pi_1-\pi_1\]
for any stopping time $\tau$.
Thus 
\[\vert \E[g(\tilde\Pi_\tau)-g(\Pi_\tau)]\vert\leq C\E[\vert \tilde\Pi_\tau -\Pi_\tau\vert]\leq C\vert\tilde\pi_1-\pi_1\vert\]
where $C$ is a Lipschitz constant of $g$. This shows that if $h$ is constant, then $V$ is Lipschitz
in its first argument, and thus it is Lipschitz also in $\pi$.

Furthermore, if $\lambda_1>0$, then 
\begin{eqnarray*}
\E[ \tilde\Pi^1_{t}-\Pi^1_{t}]&=& \tilde\pi_1-\pi_1 -\lambda_1\int_0^{t} \E\left[\tilde\Pi^1_{s}-\Pi^1_{s}\right]\,ds,
\end{eqnarray*}
so 
\[\mathbb{E}[\tilde\Pi^1_{t}-\Pi^1_{t}]=(\tilde\pi_1-\pi_1)e^{-\lambda_1 t}.\]
Consequently, 
\begin{eqnarray*}
\E\left[\int_0^\tau \vert h(\tilde\Pi_t)-h(\Pi_t)\vert\,dt\right] \leq 
D \int_0^\infty \E\left[\vert \tilde\Pi_t-\Pi_t\vert\right]\,dt =\frac{D}{\lambda_1}(\tilde\pi_1-\pi_1),
\end{eqnarray*}
where $D$ is a Lipschitz constant of $h$.
It follows that
\begin{eqnarray*}
\left\vert\E\left[g(\tilde\Pi_\tau)+\int_0^\tau h(\tilde\Pi_t)\,dt\right] -
\E\left[g(\Pi_\tau)+\int_0^\tau h(\Pi_t)\,dt\right]\right\vert \leq
(C+\frac{D}{\lambda_1})(\tilde \pi_1-\pi_1)
\end{eqnarray*}
for any stopping time $\tau$.
Therefore $V(\pi_1,\dots,\pi_n)$ is Lipschitz in its first argument. Consequently, if $\lambda_i>0$ for all 
$i=1,...,n$, then $V$ is Lipschitz also in $\pi$.
\end{proof}

\begin{remark}\label{rem}
It follows from the proof above that if $\lambda_i=0$ for $i=1,\dots,n$, 
$g$ is Lipschitz 1 in each variable separately and $h$ is constant, then 
also $V$ is Lipschitz 1 in each variable separately.
\end{remark}

%%%%%%%%%%%
\subsection{Unilateral concavity}

Next we study unilateral concavity of the value function.

Let $\tilde\P$ be a new measure defined so that 
\[\left.\frac{d\tilde \P}{d \P} \right\vert_{\F^X_t}=
\exp\left\{- \frac{1}{2}\sum_{i=1}^n\mu_i^2\int_0^t (\Pi^i_s)^2\,ds -\sum_{i=1}^n\int_0^t\mu_i\Pi^i_s\,d\bar W^i_s\right\},\]
and denote by $\tilde \E$ the corresponding expectation operator.
By the Girsanov theorem, $X_t$ is an $n$-dimensional $\tilde \P$-Brownian motion. 
%It follows that
%\[d\Pi^i_t=(\lambda_i-\mu_i^2{\Pi^i_t}^2)(1-\Pi^i_t)dt+\mu_i\Pi^i_t(1-\Pi^i_t)dX^i_t.\]
%It can also be easily checked that $\eta_t$ satisfies:
%\[d\eta_t=\eta_t\sum_{i=1}^n \mu_i \Pi^i_t dX_t^i,\]
%with $\eta_0=1$. One can check that $\eta_t$ is $L^1$ for fixed $t$ and thus $\eta_t$ is a martingale. 
Define the probability likelihood process $\Phi=(\Phi^1,...,\Phi^n)$ by
\[\Phi^i_t=\frac{\Pi^i_t}{1-\Pi^i_t}\]
and observe that $\Phi_0^i=\frac{\pi_i}{1-\pi_i}=:\phi_i$. Also note that an application of Ito's formula yields
\begin{equation}
\label{eq_phi}
d\Phi^i_t=\lambda_i(1+\Phi^i_t)dt+\mu_i\Phi^i_tdX_t^i.
\end{equation}

\begin{proposition}
\label{prop}
We have 
\[\frac{V(\pi)}{\prod_{i=1}^n(1-\pi_i)}=\inf_{\tau}\tilde\E\left[ e^{-\lambda \tau}\left(\prod_{i=1}^n(1+\Phi_\tau^i)\right) g(\Pi_\tau)
+\int_0^\tau e^{-\lambda t}\left(\prod_{i=1}^n(1+\Phi_t^i)\right)h(\Pi_t)\,dt\right], \]
where $\lambda=\sum_{i=1}^n\lambda_i$.
\end{proposition}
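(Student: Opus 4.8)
The plan is to read the claimed identity as a Girsanov change-of-measure statement: the factor $e^{-\lambda t}\prod_{i=1}^n(1+\Phi^i_t)$, multiplied by the deterministic constant $\prod_{i=1}^n(1-\pi_i)$, should be exactly the reciprocal of the density $Z_t:=\left.\frac{d\tilde\P}{d\P}\right\vert_{\F^X_t}$ used to define $\tilde\P$. First I would record the elementary measure-change bookkeeping. Since $(\Pi^i_s)^2\le1$, Novikov's condition holds on every compact interval, so $Z$ is a strictly positive $\P$-martingale; hence for any bounded stopping time $\tau$ and nonnegative $\F^X_\tau$-measurable $F$ one has $\E_\pi[F]=\tilde\E_\pi[Z^{-1}_\tau F]$. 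Applying this with $F=g(\Pi_\tau)$ treats the terminal term, and for the running cost I would write $\int_0^\tau h(\Pi_s)\,ds=\int_0^\infty \mathbf 1_{\{s<\tau\}}h(\Pi_s)\,ds$, apply the identity to the $\F^X_s$-measurable integrand $\mathbf 1_{\{s<\tau\}}h(\Pi_s)$ for each fixed $s$, and interchange integration and expectation by Tonelli. This yields
\[\E_\pi\left[g(\Pi_\tau)+\int_0^\tau h(\Pi_s)\,ds\right]=\tilde\E_\pi\left[Z^{-1}_\tau g(\Pi_\tau)+\int_0^\tau Z^{-1}_s h(\Pi_s)\,ds\right].\]

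The heart of the argument is to compute $Z^{-1}_t$ in closed form. Applying It\^o's formula to $\log(1-\Pi^i_t)$ using \eqref{Pi}, together with $d\langle\Pi^i\rangle_t=\mu_i^2(\Pi^i_t)^2(1-\Pi^i_t)^2\,dt$, the first- and second-order terms simplify dramatically and I expect to obtain
\[\log(1-\pi_i)-\log(1-\Pi^i_t)=\lambda_i t+\int_0^t\mu_i\Pi^i_s\,d\bar W^i_s+\tfrac12\int_0^t\mu_i^2(\Pi^i_s)^2\,ds.\]
Summing over $i$ and comparing the right-hand side with the exponent defining $Z_t$ then gives $\sum_{i=1}^n\bigl(\log(1-\pi_i)-\log(1-\Pi^i_t)\bigr)-\lambda t=\log Z^{-1}_t$, that is,
\[Z^{-1}_t=\Bigl(\prod_{i=1}^n(1-\pi_i)\Bigr)\,e^{-\lambda t}\prod_{i=1}^n\frac{1}{1-\Pi^i_t}=\Bigl(\prod_{i=1}^n(1-\pi_i)\Bigr)\,e^{-\lambda t}\prod_{i=1}^n(1+\Phi^i_t),\]
where the final equality is the elementary identity $1+\Phi^i_t=(1-\Pi^i_t)^{-1}$. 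Inserting this into the measure-change formula and dividing throughout by the positive constant $\prod_{i=1}^n(1-\pi_i)$ (assuming $\pi_i<1$, which is where $\Phi$ is defined) reproduces precisely the $\tilde\P$-integrand in the statement; thus the two objective functionals coincide for each fixed $\tau$ up to this constant factor, and taking the infimum over $\tau$ then proves the proposition.

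The step I expect to require the most care is the extension from bounded to general stopping times, as the measure-change identity $\E_\pi[F]=\tilde\E_\pi[Z^{-1}_\tau F]$ is immediate only for bounded $\tau$. I would establish the displayed equality first for $\tau\wedge N$ and then let $N\to\infty$. One may restrict to a.s.\ finite $\tau$, since a strictly positive running cost (or, in the testing case, the constant cost $c>0$) makes $\tau=\infty$ infinitely expensive and hence irrelevant for the infimum. For such $\tau$ the $\P$-side converges by dominated convergence in the (bounded) terminal term and monotone convergence in the nonnegative running term; since the two sides agree for every $N$, the $\tilde\P$-side converges to the same limit, the nonnegative running integral being handled by monotone convergence. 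Beyond this localisation, the argument is a routine Girsanov calculation resting on the explicit form of $Z^{-1}_t$ derived above.
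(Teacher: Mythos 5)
Your proof is correct and follows essentially the same route as the paper's: both identify $\prod_{i=1}^n(1-\pi_i)\,e^{-\lambda t}\prod_{i=1}^n(1+\Phi^i_t)$ as the density $d\P/d\tilde\P$ restricted to $\F^X_t$ and then transfer the cost functional term by term under the change of measure (you obtain the density via It\^o applied to $\log(1-\Pi^i_t)$ and handle the running cost by Tonelli at fixed times, while the paper applies It\^o to the product directly and uses the martingale property of the density to move it inside the time integral). The localisation from bounded to general stopping times that you carry out explicitly is left implicit in the paper.
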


\begin{proof}
Define a process $Y_t$ by
 \[Y_t=e^{-\lambda t}\prod_{i=1}^n \frac{1-\pi_i}{1-\Pi_t^i} = e^{-\lambda t}\prod_{i=1}^n (1+\Phi^i_t)(1-\pi_i)\]
and observe that $Y_0=1$.
Using Ito's formula and \eqref{Pi} we find that
 \begin{eqnarray*}
 dY_t &=& -\lambda Y_t dt+Y_t\sum_{i=1}^n \frac{1}{1-\Pi_t^i} d\Pi_t^i +Y_t\sum_{i=1}^n \frac{1}{(1-\Pi_t^i)^2} (d\Pi_t^i )^2\\
 &=& -\lambda Y_t dt+Y_t\sum_{i=1}^n (\lambda_idt+\mu_i\Pi^i_t d\bar W^i_t) +Y_t\sum_{i=1}^n\mu_i^2(\Pi^i_t)^2dt\\
 &=&Y_t \sum_{i=1}^n\mu_i\Pi^i_t dX_t^i.
 \end{eqnarray*}
Since  
\[\left.\frac{d \P}{d \tilde\P} \right\vert_{\F^X_t}=
\exp\left\{- \frac{1}{2}\sum_{i=1}^n\mu_i^2\int_0^t (\Pi^i_s)^2\,ds -\sum_{i=1}^n\int_0^t\mu_i\Pi^i_s\,dX^i_s\right\},\]
it follows that  
\[\left.\frac{d \P}{d\tilde \P} \right\vert_{\F^X_t}=Y_t, \]
so we can rewrite the value function as 
 \begin{eqnarray*}
 \frac{V(\pi)}{\prod_{i=1}^n (1-\pi_i)} &=& \frac{1}{ \prod_{i=1}^n (1-\pi_i)}  \inf_{\tau}\E_{\pi}\left[g(\Pi_{\tau})+\int_0^{\tau}h(\Pi_{t})dt\right]\\
 &=&\frac{1}{ \prod_{i=1}^n (1-\pi_i)} \inf_{\tau}\tilde\E_{\pi}\left[Y_{\tau}\left(g(\Pi_{\tau})+\int_0^{\tau}h(\Pi_{t})dt\right)\right]\\
 &=&\inf_{\tau}\tilde\E_{\pi}\left[e^{-\lambda \tau}\prod_{i=1}^n (1+\Phi_{\tau}^i)g(\Pi_{\tau})+e^{-\lambda \tau}\prod_{i=1}^n (1+\Phi_{\tau}^i)\int_0^{\tau}h(\Pi_{t})dt\right]\\
 &=& \inf_{\tau}\tilde\E_{\pi}\left[e^{-\lambda \tau}\prod_{i=1}^n (1+\Phi_{\tau}^i)g(\Pi_{\tau})+\int_0^{\tau}e^{-\lambda t}\prod_{i=1}^n (1+\Phi_{t}^i)h(\Pi_{t})dt\right],
 \end{eqnarray*}
which completes the proof.
\end{proof}

\begin{theorem}
The function $\pi_i\mapsto V(\pi)$ is concave in each variable separately (i.e. $\pi_i\mapsto V(\pi)$ is concave for each $i=1,...,n$).
\end{theorem}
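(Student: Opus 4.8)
The plan is to build everything on the representation in Proposition~\ref{prop}, which writes $V$ as an infimum over stopping times of a $\tilde\E$-expectation. The decisive gain from the measure change is that under $\tilde\P$ the observation $X$ is a Brownian motion whose law does \emph{not} depend on the starting point $\pi$; hence the admissible stopping times form one and the same $\pi$-independent family, and I may fix a stopping time $\tau$ and track the dependence on $\pi$ entirely through the initial condition of $\Phi$. Fixing the index $i=1$ and freezing the remaining coordinates together with the Brownian path, I would first observe that, under $\tilde\P$, equation \eqref{eq_phi} is a \emph{linear} SDE in $\Phi^1$ driven by the fixed Brownian motion $X^1$; consequently its solution is affine in the initial value $\phi_1$, say $\Phi^1_t=Z^1_t\phi_1+c^1_t$ with path-dependent coefficients satisfying $Z^1_t>0$ and $c^1_t\geq 0$.

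The heart of the argument is a perspective-function observation. Writing $\phi_1=\pi_1/(1-\pi_1)$ and using $(1-\pi_1)\phi_1=\pi_1$, a short computation shows that both
\[s_t:=(1-\pi_1)(1+\Phi^1_t)=(1+c^1_t)+(Z^1_t-1-c^1_t)\pi_1\]
and
\[x_t:=(1-\pi_1)\Phi^1_t=c^1_t+(Z^1_t-c^1_t)\pi_1\]
are affine in $\pi_1$, with $s_t>0$ on all of $[0,1]$ (its endpoint values $1+c^1_t$ and $Z^1_t$ are positive), while $\Pi^1_t=\Phi^1_t/(1+\Phi^1_t)=x_t/s_t$. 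Therefore the terminal contribution of the first coordinate,
\[(1-\pi_1)(1+\Phi^1_t)\,g(\Pi^1_t,\Pi^2_t,\dots)=s_t\,g\!\left(\tfrac{x_t}{s_t},\Pi^2_t,\dots\right),\]
is exactly the perspective of the function $g(\,\cdot\,,\Pi^2_t,\dots)$ — which is concave in its first argument by the unilateral concavity in Assumption~\ref{ass} — evaluated at the affine map $\pi_1\mapsto(x_t,s_t)$. Since the perspective of a concave function is concave and concavity is preserved under composition with an affine map, this expression is concave in $\pi_1$; the identical statement holds with $h$ in place of $g$.

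With this in hand the remaining steps are routine. For a fixed stopping time $\tau$ and a fixed path, I distribute the prefactor $\prod_{i}(1-\pi_i)$ onto the factors $1+\Phi^i_\tau$ and $1+\Phi^i_t$ in Proposition~\ref{prop}; the integrand then becomes a nonnegative, $\pi_1$-independent constant times $s_\tau\,g(x_\tau/s_\tau,\dots)$ plus an integral over $t\in[0,\tau]$ of nonnegative constants times $s_t\,h(x_t/s_t,\dots)$. Each summand is concave in $\pi_1$ by the previous paragraph, nonnegative scaling and integration against $dt$ preserve concavity, so the whole integrand is concave in $\pi_1$ pathwise. Taking $\tilde\E$ preserves concavity, and finally the infimum over $\tau$ of a family of concave functions is again concave. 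This gives concavity of $\pi_1\mapsto V(\pi)$ on $[0,1]$; by relabelling the coordinates the same holds for each $i$, and the edge cases $\pi_i\in\{0,1\}$ are covered by the continuity of $V$ established in the preceding theorem.

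The step I expect to require the most care is the perspective identification: verifying that, after the measure change, the numerator $(1-\pi_1)\Phi^1_t$ and the denominator $(1-\pi_1)(1+\Phi^1_t)$ of $\Pi^1_t$ are \emph{simultaneously} affine in $\pi_1$ with a strictly positive denominator. It is precisely this affine-over-affine structure that converts the \emph{a priori} non-concave composition $\pi_1\mapsto g(\Pi^1_t,\dots)$ into the concave perspective map, and it is what makes the measure change worthwhile. Everything else — the linearity of \eqref{eq_phi} under $\tilde\P$, and the stability of concavity under nonnegative scaling, integration, expectation and infima — is standard.
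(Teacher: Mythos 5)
Your proposal is correct, and it follows the same overall strategy as the paper: pass to $\tilde\P$ via Proposition~\ref{prop} so that the law of $X$ (hence the class of admissible stopping times) no longer depends on $\pi$, use the linearity of \eqref{eq_phi} to get that $\Phi^1_t$ is affine in $\phi_1$ and independent of the other $\phi_j$, establish pathwise concavity of the integrand in $\pi_1$ for each fixed $\tau$, and then conclude by taking expectation and infimum. Where you genuinely diverge is in the key pathwise step. The paper assumes $g\in C^2$, differentiates $G(\pi,\omega)=(1-\pi_1)(1+\Phi^1_\tau)g(\Pi_\tau)$ twice, and uses the identity $2\,\partial_{\pi_1}\Phi^1_\tau=(1-\pi_1)\,\partial^2_{\pi_1}\Phi^1_\tau$ to collapse the second derivative to $\bigl(\tfrac{1-\Pi^1_\tau}{1-\pi_1}\bigr)^3 Z_\tau^2\, g_{\pi_1\pi_1}\leq 0$, deferring general $g$ to ``approximation.'' You instead observe that $x_t=(1-\pi_1)\Phi^1_t$ and $s_t=(1-\pi_1)(1+\Phi^1_t)$ are simultaneously affine in $\pi_1$ with $s_t>0$ and $\Pi^1_t=x_t/s_t$, so that $s_t\,g(x_t/s_t,\dots)$ is the perspective of a concave function precomposed with an affine map, hence concave. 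Your route buys a proof that works directly for any unilaterally concave $g$ and $h$ without a smoothing step, and it isolates the structural reason the computation works (the affine-over-affine form of $\Pi^1_t$), whereas the paper's derivative identity makes the same mechanism visible only after the algebra. Both arguments handle the running-cost term identically. The only points to tidy are the boundary value $\pi_1=1$ (where $\phi_1=\infty$ and the representation degenerates), which you correctly dispatch by continuity of $V$, and a sentence confirming that the factors $(1-\pi_j)(1+\Phi^j_t)$ for $j\neq 1$ are indeed $\pi_1$-independent nonnegative weights — which you do state.
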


\begin{proof}
It suffices to check that $\pi_1\mapsto V(\pi)$ is concave. To do that, first note that since \eqref{eq_phi} 
is a linear equation, it can be solved explicitly as
\begin{equation}
\label{expsolphi}
\Phi^i_t=e^{(\lambda_i-\frac{\mu_i^2}{2})t+\mu_i X^i_t}\left(\phi_i+\lambda_i\int_0^t e^{-(\lambda_i-\frac{\mu_i^2}{2})s-\mu_i X^i_s}ds\right).
\end{equation}
Thus $\Phi^i_t$ is affine in $\phi_i$ and independent of $\phi_j$, $j\not= i$.
Moreover, denoting
\[Z_t=e^{(\lambda_1-\frac{\mu_ 1^2}{2})t+\mu_1 X^1_t},\]
we have that 
\[\frac{\partial \Phi^1_t}{\partial \pi_1}=\frac{Z_t}{(1-\pi_1)^2}\]
and
\[\frac{\partial^2 \Phi^1_t}{\partial \pi_1^2}=\frac{2Z_t}{(1-\pi_1)^3}\]
so that
\begin{equation}
\label{rel}
2\frac{\partial \Phi^1_t}{\partial \pi_1} =(1-\pi)  \frac{\partial^2 \Phi^1_t}{\partial \pi_1^2}.
\end{equation}

Fix an $\F^X$-stopping time $\tau$; we next claim that 
\begin{equation}
\label{G}
G(\pi,\omega):=(1-\pi_1)(1+\Phi^1_\tau)g(\Pi_\tau)
\end{equation}
is concave in $\pi_1$. To see this, assume that $g$ is $C^2$ (the general case follows by approximation).
Then
\begin{align*}
G_{\pi_1\pi_1}
&=  -2 \frac{\partial}{\partial_ {\pi_1}}\left(\left(1+\Phi_\tau^1\right) g(\Pi_\tau)\right)+(1-\pi_1)\frac{\partial}{\partial_ {\pi_1 \pi_1}}\left(\left(1+\Phi_\tau^1\right) g(\Pi_\tau)\right)\\
&= -2\frac{\partial \Phi^1_{\tau}}{\partial \pi_1} \left( g+\frac{g_{\pi_1}}{1+\Phi_{\tau}^1}\right)  +(1-\pi_1)\left( \frac{\partial^2 \Phi^1_{\tau}}{\partial \pi_1^2}  ( g+\frac{g_{\pi_1}}{1+\Phi_{\tau}^1})
+(\frac{\partial \Phi^1_{\tau}}{\partial \pi_1})^2 \frac{g_{\pi_1\pi_1}}{(1+\Phi^1_{\tau})^3}\right) \\
&=\left(\frac{1-\Pi_{\tau}^1}{1-\pi_1}\right)^3Z_{\tau}^2 g_{\pi_1\pi_1}
\end{align*}
by \eqref{rel}, so $G$ in \eqref{G} is concave in $\pi_1$. 
Taking expectation we have that
\[\pi_1 \mapsto \prod_{i=1}^n (1-\pi_i)\tilde\E\left[  \left( e^{-\lambda \tau}\prod_{i=1}^n (1+\Phi_{\tau}^i)g(\Pi_{\tau}) \right)\right]\]
is concave in $\pi_1$. By similar arguments,
\[\pi_1\mapsto \int_0^{\tau}e^{-\lambda t}\left(\prod_{i=1}^n(1+\Phi_t^i)\right)h(\Pi_t^i) dt\]
is concave, so 
\[ \prod_{i=1}^n (1-\pi_i)\tilde\E\left[  \left( e^{-\lambda \tau}\prod_{i=1}^n (1+\Phi_{\tau}^i)g(\Pi_{\tau}) \right)
+\int_0^{\tau}e^{-\lambda t}\left(\prod_{i=1}^n(1+\Phi_t^i)\right)h(\Pi_t^i) dt\right]\]
is concave in $\pi_1$ for each stopping time $\tau$.
Taking infimum over stopping times $\tau$, it follows that $\pi_1\mapsto V(\pi)$ concave, which completes the proof.
\end{proof}

\section{Sequential testing problems}
\label{4}

In this section we use the general results of Section~\ref{3} to provide structural results for the multi-dimensional sequential testing problems (ST1)-(ST3). For the sake of graphical illustrations, we present the results for the 
case $n=2$;  the higher-dimensional version works similarly, and our results easily carry over to that case.

\begin{remark}
In the structural studies of (ST1)-(ST3) and (QD1)-(QD3) below, we focus on what conclusions can be drawn from 
our main result on unilateral concavity. 
Refined studies would aim at further properties of the stopping boundaries that we find. For example, a lower
bound on the continuation region is provided by the set $\{\mathcal L g+h< 0\}$, where $\mathcal L$ is the generator of $\Pi$, methods to prove that wedges of $g$ are automatically contained in the continuation region can be obtained, and studies of continuity of stopping boundaries can 
be performed along the lines of \cite{DA}. 
%In the interest of brevity, however, we refrain from performing such detailed analysis. 
\end{remark}

\subsection{(ST1)}
In this section we provide further details for the problem (ST1) of determining all (i.e. both) drifts. 
Thus we consider the stopping problem 
\[V(\pi)=\inf_{\tau}\E_\pi\left[g(\Pi_\tau) + c\tau\right],\]
where $g(\pi)=\pi_1\wedge(1-\pi_1) +\pi_2\wedge(1-\pi_2)$.
% and the process $\Pi$ solves 
%\[d\Pi^i_t=\mu_i \Pi_t^i (1- \Pi_t^i)d \bar{W}_t^i\]
%for $i=1,2$, where $\bar{W}^1$ and $\bar{W}^2$ are independent Brownian motions.
Denote by $R_1=[0,1/2]\times [0,1/2]$ so that $g=\pi_1+\pi_2$ on $R_1$.
Let $(A^*_i,1-A^*_i)$ be the continuation region for the one-dimensional problem $ST(\mu_i,c)$, $i=1,2$.

\begin{proposition}
\label{monotone}
There exists a non-increasing upper semi-continuous function $b:[0,A^*_1]\to [0,A^*_2]$ such that 
\[\D\cap R_1=\{\pi\in R_1:\pi_1\leq A^*_1, \pi_2\leq b(\pi_1)\} .\] 
\end{proposition}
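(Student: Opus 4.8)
The plan is to combine the two main tools from Section~\ref{3} --- unilateral concavity of $V$ and the sharp Lipschitz-1 property of Remark~\ref{rem} --- with a reduction to the one-dimensional problem along the edge $\{\pi_2=0\}$. First I would record that this edge is absorbing for $\Pi^2$: since $\lambda_2=0$, \eqref{Pi} gives $d\Pi^2_t=\mu_2\Pi^2_t(1-\Pi^2_t)\,d\bar W^2_t$, so $\Pi^2_0=0$ forces $\Pi^2_t\equiv 0$ and hence $g(\Pi_t)=\Pi^1_t\wedge(1-\Pi^1_t)$. Thus $V(\pi_1,0)=u^{\mu_1,c}(\pi_1)$, the one-dimensional testing value, and on $R_1$ one has $(\pi_1,0)\in\D$ iff $u^{\mu_1,c}(\pi_1)=\pi_1$, i.e. iff $\pi_1\leq A^*_1$. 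Next I would obtain the upper bound $V(\pi)\leq u^{\mu_1,c}(\pi_1)+\pi_2\wedge(1-\pi_2)$ by restricting the infimum in \eqref{V} to $\mathcal{F}^{X^1}$-stopping times $\sigma$ and using that, with $\psi(x):=x\wedge(1-x)$ concave and $\Pi^2$ a bounded martingale, $\psi(\Pi^2)$ is a bounded supermartingale, so $\E[\psi(\Pi^2_\sigma)]\leq\psi(\pi_2)$ by optional sampling. On $R_1$ this reads $V(\pi)\leq u^{\mu_1,c}(\pi_1)+\pi_2$, which lies strictly below $g(\pi)=\pi_1+\pi_2$ as soon as $\pi_1\in(A^*_1,1/2]$; hence $\D\cap R_1\subseteq[0,A^*_1]\times[0,A^*_2]$, the second factor coming from the symmetric argument.

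Fixing $\pi_1\leq A^*_1$, I would then show that the vertical section of $\D\cap R_1$ is a closed interval anchored at $0$. On $R_1$ the map $\pi_2\mapsto g(\pi_1,\pi_2)=\pi_1+\pi_2$ is affine, while $\pi_2\mapsto V(\pi_1,\pi_2)$ is concave by the unilateral concavity theorem of Section~\ref{3}; hence $f(\pi_2):=V(\pi_1,\pi_2)-g(\pi_1,\pi_2)$ is concave and non-positive, so the superlevel set $\{f=0\}$ is a closed subinterval of $[0,1/2]$. Since $(\pi_1,0)\in\D$, this interval contains $0$ and therefore equals $[0,b(\pi_1)]$, where I set $b(\pi_1):=\sup\{\pi_2\in[0,1/2]:(\pi_1,\pi_2)\in\D\}\in[0,A^*_2]$.

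For monotonicity I would invoke Remark~\ref{rem}: since all $\lambda_i=0$, $g$ is Lipschitz $1$ in each variable and $h\equiv c$, so $V$ is Lipschitz $1$ in $\pi_1$. As $g$ has slope exactly $1$ in $\pi_1$ on $R_1$, the gain $g-V$ is non-decreasing in $\pi_1$ there; consequently $(\pi_1',\pi_2)\in\D$ together with $\pi_1\leq\pi_1'$ forces $(\pi_1,\pi_2)\in\D$, which is exactly the statement that $b$ is non-increasing. Upper semicontinuity then follows from the closedness of $\D$ (itself a consequence of continuity of $V$ and $g$): if $\pi_1^{(k)}\to\pi_1^0$ and $b(\pi_1^{(k)})\to\ell$, then $(\pi_1^{(k)},b(\pi_1^{(k)}))\in\D$ converges to a limit $(\pi_1^0,\ell)\in\D\cap R_1$, whence $\ell\leq b(\pi_1^0)$.

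I expect the conceptual crux to be the interplay of the two regularity inputs: the interval shape of the sections requires unilateral concavity, whereas the monotonicity of $b$ requires the \emph{sharp} Lipschitz constant $1$, since a generic Lipschitz bound would not produce the slope-$1$ cancellation in $g-V$ that yields a non-increasing boundary. Once that cancellation is identified, the remaining steps are routine.
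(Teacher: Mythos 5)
Your proof is correct, and its skeleton --- reduction to the one-dimensional problem on the edge $\{\pi_2=0\}$, interval structure of the vertical sections, monotonicity of $b$ from the sharp Lipschitz-$1$ bound played against the slope-$1$ of $g$, and upper semi-continuity from closedness of $\D$ --- is the same as the paper's. Two sub-steps are handled differently. For the shape of the vertical sections you invoke unilateral concavity of $V$; the paper instead derives downward-closedness of $\D\cap R_1$ in the $\pi_2$-direction from the very same Lipschitz-$1$/slope-$1$ cancellation that you use for monotonicity in $\pi_1$, so its proof of this particular proposition never actually needs the concavity theorem, only Remark~\ref{rem}. For the containment $\D\cap R_1\subseteq[0,A_1^*]\times[0,A_2^*]$ you build the explicit sub-optimal bound $V(\pi)\leq u^{\mu_1,c}(\pi_1)+\pi_2\wedge(1-\pi_2)$ by restricting to $\F^{X^1}$-stopping times and applying optional sampling to the bounded supermartingale $\Pi^2_t\wedge(1-\Pi^2_t)$; the paper gets the same containment for free from downward-closedness together with the edge behaviour ($(\pi_1,0)\in\C$ for $\pi_1\in(A_1^*,1/2]$ and $(0,\pi_2)\in\C$ for $\pi_2\in(A_2^*,1/2]$). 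Your route is slightly longer but produces a quantitative upper bound on $V$ as a by-product; the paper's is more economical in that a single mechanism delivers the containment, the section structure and the monotonicity at once.
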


\begin{proof}
First note that $V(\pi_1,0)=u^{\mu_1,c}(\pi_1)$, so $(\pi_1,0)\in\C$ precisely if 
$\pi_1\in(A^*_1,1-A_1^*)$. Since $V$ is 
Lipschitz continuous with parameter 1 in each direction by Remark~\ref{rem}, and since $g$ has slope 1 in the $\pi_2$-direction
for $\pi_2\leq 1/2$, it follows that 
$\D\cap R_1=\{\pi\in R_1:\pi_1\leq A^*_1, \pi_2\leq b(\pi_1)\}$ for some $b:[0,A^*_1]\to[0,1/2]$.
By a similar argument, starting from $(0,\pi_2)$ instead, it follows that $b\leq A^*_2$ and that $b$ is non-increasing.
Finally, the continuity of $V$ implies that $\C$ is open, and $b$ is thus upper semi-continuous.
\end{proof}

\begin{figure}[htp!]\centering
  \subfigure[]{\includegraphics[width=0.45\textwidth]{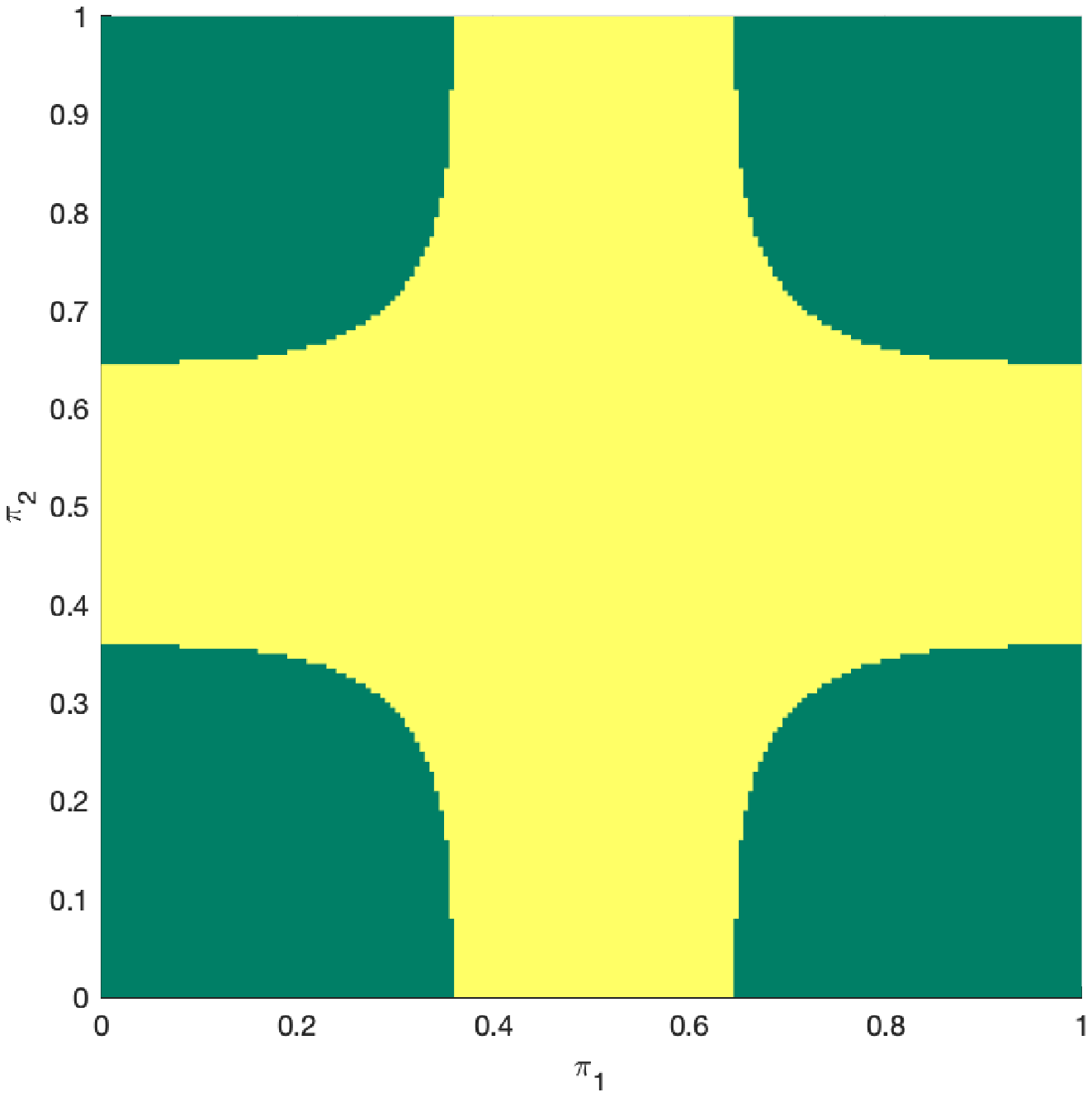}\label{figureST1}}
  \subfigure[]{\includegraphics[width=0.45\textwidth]{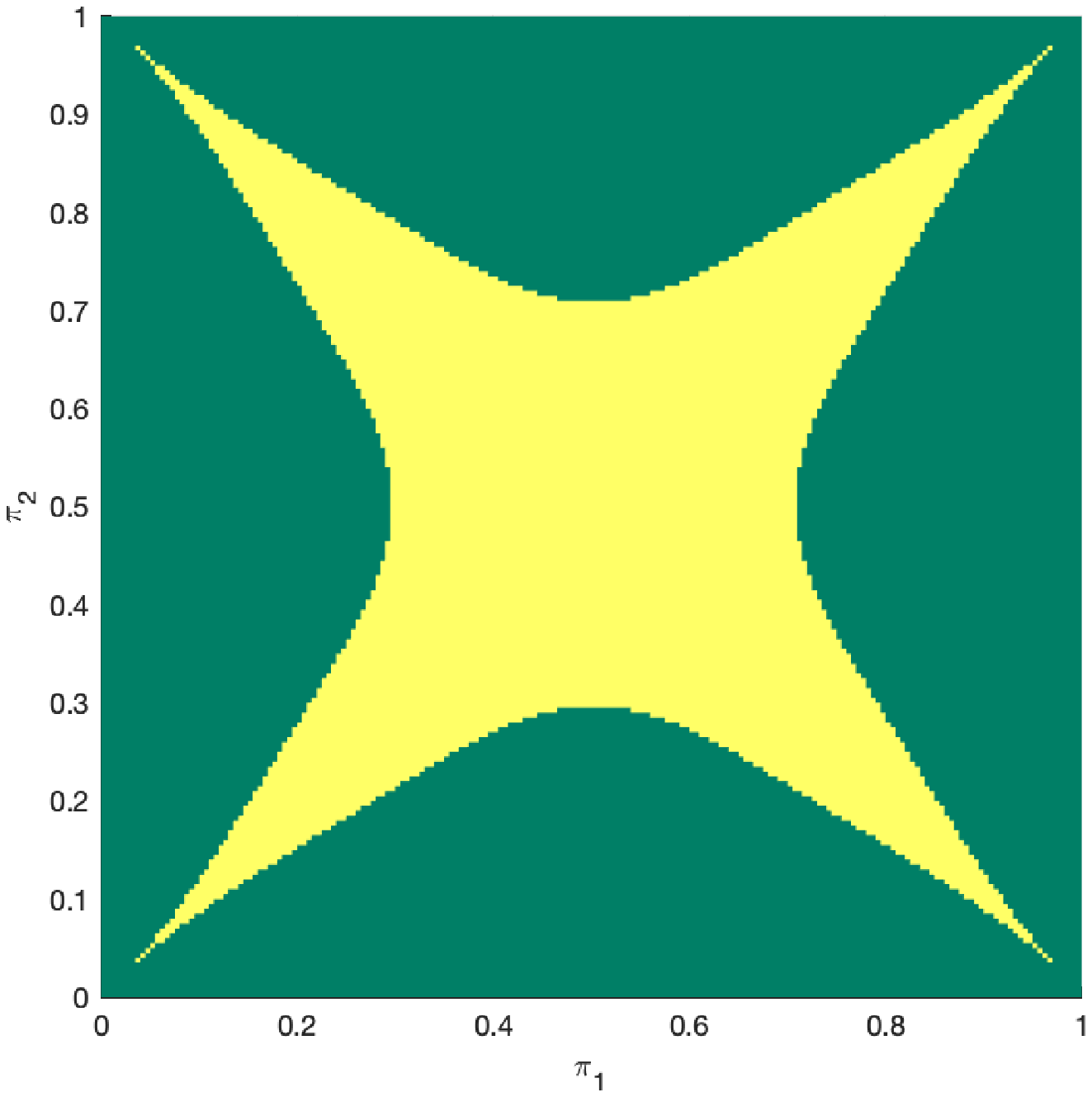}\label{figureST2}}
\caption{Continuation regions (yellow) in (ST1) on the left and in (ST2) on the right.
The parameters are $\mu_1=\mu_2=1$ and $c=0.2$.}
\end{figure}

By symmetry, the part of the continuation region within each square $R_2:=[1/2,1]\times[0,1/2]$, $R_3:=[1/2,1]\times[1/2,1]$ and $R_4:=[0,1/2]\times[0,1/2]$ can be described similarly as in Proposition~\ref{monotone}. For a graphical illustration, see Figure~\ref{figureST1}.

\subsection{(ST2)}
Next we provide further details for the problem (ST2) in the case $n=2$. 
Thus we consider the stopping problem 
\[V(\pi)=\inf_{\tau}\E_\pi\left[g(\Pi_\tau) + c\tau\right],\]
where $g(\pi)=\pi_1\wedge(1-\pi_1) \wedge\pi_2\wedge(1-\pi_2)$. 
Consider the triangular region $T:=\{\pi_2\leq \pi_1\wedge(1-\pi_1)\}$, and note that $g=\pi_2$ in this region.

\begin{proposition}
There exists an upper semi-continuous function $b:[0,1]\to[0,1]$ with $b(\pi_1)\leq\pi_1\wedge(1-\pi_1)$ such that
\[\D\cap T = \{\pi\in T: \pi_2 \leq b(\pi_1)\}.\]
Furthermore, $b$ is non-decreasing on $[0,1/2]$ and satisfies $b(\pi_1)=b(1-\pi_1)$.
\end{proposition}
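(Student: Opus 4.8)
The plan is to follow closely the proof of Proposition~\ref{monotone}, exploiting four properties of $V$ on the triangular region $T$: (i) $V$ is Lipschitz continuous with constant $1$ in each variable separately, which holds by Remark~\ref{rem} since here $\lambda_1=\lambda_2=0$, $h\equiv c$ is constant, and $g=\pi_1\wedge(1-\pi_1)\wedge\pi_2\wedge(1-\pi_2)$ is Lipschitz $1$ in each variable (a minimum of functions each $1$-Lipschitz in that variable); (ii) $V$ is continuous, so $\C$ is open and $\D\cap T$ is closed; (iii) $\pi_1\mapsto V(\pi)$ is concave by the preceding theorem; and (iv) $V$ is invariant under the reflection $\pi_1\mapsto 1-\pi_1$. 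Throughout I use that $g=\pi_2$ on $T$ and that $V\le g$ everywhere.

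For the existence and the down-set structure I would set $b(\pi_1):=\sup\{s:(\pi_1,s)\in\D\cap T\}$. First note that $(\pi_1,0)\in\D$ for every $\pi_1$, since $g(\pi_1,0)=0$ forces $V(\pi_1,0)=0=g(\pi_1,0)$; hence $b$ is well defined and nonnegative. To see that $\D\cap T$ is a down-set in the $\pi_2$-direction, suppose $(\pi_1,\pi_2)\in\D\cap T$ and $0\le\pi_2'<\pi_2$. By the Lipschitz-$1$ property in $\pi_2$,
\[V(\pi_1,\pi_2')\ge V(\pi_1,\pi_2)-(\pi_2-\pi_2')=\pi_2-(\pi_2-\pi_2')=\pi_2',\]
while $V(\pi_1,\pi_2')\le g(\pi_1,\pi_2')=\pi_2'$ because $(\pi_1,\pi_2')\in T$; thus $(\pi_1,\pi_2')\in\D$. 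This yields $\D\cap T=\{\pi\in T:\pi_2\le b(\pi_1)\}$, and the inclusion $\D\cap T\subseteq T$ gives $b(\pi_1)\le\pi_1\wedge(1-\pi_1)$. Upper semicontinuity of $b$ follows from closedness of $\D\cap T$: along any sequence $\pi_1^k\to\pi_1$ with $b(\pi_1^k)\to L$, the points $(\pi_1^k,b(\pi_1^k))\in\D\cap T$ converge to $(\pi_1,L)$, which therefore lies in the closed set $\D\cap T$, so $L\le b(\pi_1)$.

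The symmetry $b(\pi_1)=b(1-\pi_1)$ I would obtain from the invariance of the whole problem under $\pi_1\mapsto 1-\pi_1$. Indeed, with $\lambda_1=0$ the process $1-\Pi^1$ solves the same equation \eqref{Pi} as $\Pi^1$ (now driven by the Brownian motion $-\bar W^1$), so $(\Pi^1,\Pi^2)$ started from $(1-\pi_1,\pi_2)$ has the same law as $(1-\Pi^1,\Pi^2)$ started from $(\pi_1,\pi_2)$; since $g$ is invariant under $\pi_1\mapsto 1-\pi_1$ and $h\equiv c$, this gives $V(1-\pi_1,\pi_2)=V(\pi_1,\pi_2)$. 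As $T$ is also symmetric under this reflection, the two cross-sections of $\D\cap T$ agree and hence $b(1-\pi_1)=b(\pi_1)$.

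The main point, and the step where the unilateral concavity genuinely enters, is the monotonicity of $b$ on $[0,1/2]$. Fix $\pi_2$; by (iii) the map $\pi_1\mapsto V(\pi_1,\pi_2)$ is concave on $[0,1]$, and by (iv) it is symmetric about $\pi_1=1/2$, so it is non-decreasing on $[0,1/2]$. Now take $\pi_1\le\pi_1'\le 1/2$ and put $\pi_2=b(\pi_1)$, so that $(\pi_1,b(\pi_1))\in\D$ and $V(\pi_1,b(\pi_1))=b(\pi_1)$. Since $\pi_1'\le 1/2$ we have $\pi_1'\wedge(1-\pi_1')=\pi_1'\ge\pi_1\ge b(\pi_1)$, so $(\pi_1',b(\pi_1))\in T$, and combining monotonicity of $V(\cdot,b(\pi_1))$ on $[0,1/2]$ with $V\le g$ gives
\[b(\pi_1)=V(\pi_1,b(\pi_1))\le V(\pi_1',b(\pi_1))\le g(\pi_1',b(\pi_1))=b(\pi_1).\]
Hence $V(\pi_1',b(\pi_1))=g(\pi_1',b(\pi_1))$, so $(\pi_1',b(\pi_1))\in\D$ and $b(\pi_1')\ge b(\pi_1)$, proving monotonicity. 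I expect the only real care to be bookkeeping rather than conceptual: one must verify that all comparison points stay inside $T$ (so that the identification $g=\pi_2$ is legitimate) and that the supremum defining $b$ is attained, both of which follow from the explicit description of $T$ and the closedness of $\D\cap T$.
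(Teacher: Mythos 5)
Your proof is correct and uses the same ingredients as the paper's: the Lipschitz(1) property of $V$ together with the slope of $g$ in the $\pi_2$-direction for the down-set structure, continuity of $V$ for upper semi-continuity of $b$, and reflection symmetry combined with unilateral concavity for the monotonicity on $[0,1/2]$. The only cosmetic difference is that you phrase the last step as ``concave and symmetric implies non-decreasing on $[0,1/2]$,'' whereas the paper interpolates $V$ between the two symmetric stopping points $(\pi_1,\pi_2)$ and $(1-\pi_1,\pi_2)$ to capture the whole segment; these are the same argument.
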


\begin{proof}
Since $V(\pi_1,0)=0=g(\pi_1,0)$, we clearly have that $(\pi_1,0)$ is in the stopping region. Now, since $V$ is Lipschitz(1) by Remark~\ref{rem}, and since 
$g$ has slope 1 in the $\pi_2$-direction, the existence of $b$ follows. 

The monotonicity property is a consequence of symmetry and concavity: if $(\pi_1,\pi_2)\in T\cap \D$
then also $(1-\pi_1,\pi_2)\in T\cap\D$, so unilateral concavity yields that
the whole line segment $\{(p,\pi_2);\pi_1\leq p\leq 1-\pi_1\}$ belongs to the stopping region.
Finally, the asserted upper semi-continuity of $b$ follows from the continuity of $V$.
\end{proof}

Let $(A_i^*,1-A_i^*)$ be the continuation region of $ST(\mu_i,c)$, $i=1,2$.

\begin{proposition} 
The rectangle $R:=(A_1^* , 1-A_1^*)\times (A_2^* ,1- A_2^*)$ is contained in the continuation region.
\end{proposition}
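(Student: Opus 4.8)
The plan is to show directly that each $\pi\in R$ belongs to $\C=\{V<g\}$ by producing a suboptimal stopping rule whose cost is strictly below the immediate stopping value $g(\pi)$. The key structural observation is that $g(\pi)=\pi_1\wedge(1-\pi_1)\wedge\pi_2\wedge(1-\pi_2)$ is dominated by the one-dimensional sequential-testing cost in either coordinate, so it suffices to run the optimal one-dimensional strategy in whichever coordinate realises the minimum; the independence of the coordinates of $\Pi$ is what makes such a single-coordinate strategy tractable inside the two-dimensional problem.

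Concretely, I would fix $\pi\in R$ and choose $j\in\{1,2\}$ with $\pi_j\wedge(1-\pi_j)=g(\pi)$. Because $\pi\in R$ we have $\pi_j\in(A_j^*,1-A_j^*)$, which is exactly the continuation region of the one-dimensional problem $ST(\mu_j,c)$; hence $u^{\mu_j,c}(\pi_j)<\pi_j\wedge(1-\pi_j)$. Let $\tau_j^*=\inf\{t\geq 0:\Pi^j_t\notin(A_j^*,1-A_j^*)\}$ be the associated optimal one-dimensional stopping time. This $\tau_j^*$ is $\F^{X^j}$-adapted, hence an $\F^X$-stopping time, and it attains $u^{\mu_j,c}(\pi_j)=\E_{\pi_j}[\Pi^j_{\tau_j^*}\wedge(1-\Pi^j_{\tau_j^*})+c\tau_j^*]$.

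I would then use $\tau_j^*$ as a candidate in \eqref{V}. Pointwise $g(\Pi_{\tau_j^*})\leq\Pi^j_{\tau_j^*}\wedge(1-\Pi^j_{\tau_j^*})$ since $g$ is a minimum over coordinates, and the coordinate-$j$ quantity $\Pi^j_{\tau_j^*}\wedge(1-\Pi^j_{\tau_j^*})+c\tau_j^*$ depends only on $X^j$, so by independence its expectation started from $\pi$ equals its one-dimensional expectation started from $\pi_j$. Combining these gives
\[V(\pi)\leq\E_\pi\big[g(\Pi_{\tau_j^*})+c\tau_j^*\big]\leq\E_{\pi_j}\big[\Pi^j_{\tau_j^*}\wedge(1-\Pi^j_{\tau_j^*})+c\tau_j^*\big]=u^{\mu_j,c}(\pi_j)<\pi_j\wedge(1-\pi_j)=g(\pi),\]
so $\pi\in\C$. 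As $\pi\in R$ was arbitrary, $R\subseteq\C$. I do not expect a genuine obstacle: the argument is a reduction to the one-dimensional theory rather than a computation. The only points needing care are the bookkeeping of which coordinate attains the minimum of $g$ and the use of the strict one-dimensional inequality on $(A_j^*,1-A_j^*)$, which is precisely what upgrades the estimate to the strict inequality placing $\pi$ in the open continuation region.
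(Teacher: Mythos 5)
Your proof is correct and follows essentially the same route as the paper's: pick the coordinate $j$ realising the minimum in $g$, use the one-dimensional optimal stopping time $\tau_j^*$ as a suboptimal candidate, bound $g(\Pi_{\tau_j^*})$ by $\Pi^j_{\tau_j^*}\wedge(1-\Pi^j_{\tau_j^*})$, and invoke the strict inequality $u^{\mu_j,c}(\pi_j)<\pi_j\wedge(1-\pi_j)$ on the one-dimensional continuation region. The added remarks about independence and $\F^X$-measurability are harmless elaborations of what the paper leaves implicit.
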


\begin{proof}
Take $\pi\in R$, and let $i\in\{1,2\}$ be such that $g(\pi)=\pi_i\wedge(1-\pi_i)$.
Define
\[\tau_i:=\inf\{t\geq 0:\Pi^i_t\notin (A_i^*,1-A_i^*)\}\]
to be the optimal stopping time in the one-dimensional problem of determining $Y^i$.
Then 
\[V(\pi)\leq \E_\pi\left[ g(\Pi_{\tau_i})+ c\tau_i\right] \leq \E_\pi\left[ \Pi^i_{\tau_i}\wedge(1-\Pi^i_{\tau_i}) + c\tau_i\right] =u^{\mu_i,c}(\pi_i)<\pi_i\wedge(1-\pi_i)=g(\pi),\]
which shows that $\pi\in\C$. 
\end{proof}

For a graphical illustration of the continuation region in (ST2), see Figure~\ref{figureST2}.

\subsection{(ST3)}

We now study the sequential testing problem (ST3) with cost reduction given by $\gamma\in(0,1)$ (we exclude the cases $\gamma\in\{0,1\}$ since they correspond to (ST1) and (ST2), respectively).
The value function of this problem is 
\[V(\pi)=\inf_{\tau}\E_\pi\left[g(\Pi^1_\tau,\Pi^2_\tau) +c\tau\right],\]
where
\[g(\pi_1,\pi_2):= \left(\pi_1 \land (1-\pi_1)+u(\pi_2)\right) \land \left(u(\pi_1)+ \pi_2 \land (1-\pi_2)\right ) \]
and $u=u^{\mu,c(1-\gamma)}$ is the value function of the one-dimensional problem $ST(\mu,c(1-\gamma))$.
Since $u$ is concave and Lipschitz(1), the value 
function $V$ is also concave and Lipschitz(1) in each variable. 
Denote by $T:= \{\pi\in[0,1]^2:0\leq\pi_2\leq \pi_1\wedge(1-\pi_1)\}$, and note that $g(\pi_1,\pi_2)=u(\pi_1) + 
\pi_2$ on $T$. 

\begin{proposition}
There exists an upper semi-continuous function $b:[0,1]\to[0,1/2]$ with $b(\pi_1)\leq\pi_1\wedge(1-\pi_1)$ such that $\D\cap T=\{\pi\in T:\pi_2\leq b(\pi_1)\}$.
Moreover, $b(\pi_1)=b(1-\pi_1)$.
\end{proposition}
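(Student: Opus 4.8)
The plan is to mirror the argument used for (ST2), since the geometry is essentially the same: on the triangle $T$ the penalty $g$ equals $u(\pi_1)+\pi_2$, which has slope exactly $1$ in the $\pi_2$-direction, and the value function $V$ is Lipschitz$(1)$ in $\pi_2$. First I would fix $\pi_1\in[0,1]$ and study the map $\pi_2\mapsto V(\pi_1,\pi_2)-g(\pi_1,\pi_2)$ along the vertical slice inside $T$, i.e.\ for $\pi_2\in[0,\pi_1\wedge(1-\pi_1)]$. At the bottom of the slice, $\pi_2=0$, one has $g(\pi_1,0)=u(\pi_1)$; I would argue that stopping is optimal here (so $(\pi_1,0)\in\D$) by noting that the one-dimensional strategy for the $\Pi^1$-coordinate already achieves the cost $u(\pi_1)$, while the $\Pi^2$-coordinate contributes nothing to $g$ at $\pi_2=0$ and can only add running cost. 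Then, since $g$ increases with slope $1$ in $\pi_2$ while $V$ cannot increase faster than slope $1$ (Lipschitz$(1)$), the difference $V-g$ is non-decreasing in $\pi_2$; hence the stopping set in each slice is a lower interval $\{\pi_2\le b(\pi_1)\}$. This defines $b$, and the bound $b(\pi_1)\le \pi_1\wedge(1-\pi_1)$ is automatic since $T$ itself is bounded above by this value.

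Next I would establish the symmetry $b(\pi_1)=b(1-\pi_1)$. This follows from the symmetry of the problem under swapping $\pi_1\leftrightarrow 1-\pi_1$: since $\mu_1=\mu_2=\mu$ and $u(\pi_1)=u(1-\pi_1)$, one checks that $g(\pi_1,\pi_2)=g(1-\pi_1,\pi_2)$, and the dynamics of $\Pi^1$ in \eqref{Pi} are invariant in distribution under $\pi_1\mapsto 1-\pi_1$ when $\lambda_1=0$ (the diffusion coefficient $\mu\pi(1-\pi)$ and the zero drift are both symmetric about $1/2$). Consequently $V(\pi_1,\pi_2)=V(1-\pi_1,\pi_2)$, so the two points $(\pi_1,\pi_2)$ and $(1-\pi_1,\pi_2)$ lie in $\D$ simultaneously, giving $b(\pi_1)=b(1-\pi_1)$. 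Finally, upper semi-continuity of $b$ follows from continuity of $V$ (hence openness of $\C$) exactly as in the earlier propositions.

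The step I expect to require the most care is verifying that $(\pi_1,0)\in\D$, i.e.\ that the bottom edge of $T$ lies in the stopping region, since this is what anchors the lower-interval structure. Unlike in (ST1), here the penalty $g$ is itself the value function of an \emph{embedded} one-dimensional testing problem through the term $u(\pi_1)$, so I must be careful that stopping immediately is genuinely optimal at $\pi_2=0$ rather than merely a natural candidate; concretely, one should confirm $V(\pi_1,0)=u(\pi_1)=g(\pi_1,0)$ by comparing with the one-dimensional problem $ST(\mu,c(1-\gamma))$ and checking that observing the second coordinate (which contributes $0$ to the terminal penalty at $\pi_2=0$ but positive running cost) cannot help. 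The monotonicity and symmetry steps are then routine consequences of the Lipschitz$(1)$ property and the reflection symmetry already recorded for $V$.
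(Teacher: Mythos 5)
Your overall architecture matches the paper's: anchor the bottom edge $[0,1]\times\{0\}$ in $\D$, use the fact that $g$ has slope exactly $1$ in $\pi_2$ on $T$ to get the lower-interval structure of each vertical slice, and get symmetry and upper semi-continuity from the symmetric set-up and the continuity of $V$. (For the slice structure the paper invokes unilateral concavity of $V$ together with the affineness of $g$ in $\pi_2$ on $T$, while you invoke the Lipschitz$(1)$ bound; both work, and the paper itself uses the Lipschitz route in (ST1)--(ST2). Note the sign slip: $V-g$ is non-\emph{increasing} in $\pi_2$, not non-decreasing, which is what your lower-interval conclusion actually requires.)

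The genuine gap is the anchor step, which you correctly flag as delicate but do not close, and the mechanism you gesture at would not close it. Since $\lambda_2=0$ and the diffusion coefficient of $\Pi^2$ vanishes at $0$, starting from $(\pi_1,0)$ one has $\Pi^2_t\equiv 0$, so the second coordinate is irrelevant and the question is purely one-dimensional: is $\inf_\tau\E[u(\Pi^1_\tau)+c\tau]$ equal to $u(\pi_1)$? Your claim that ``the one-dimensional strategy for the $\Pi^1$-coordinate already achieves the cost $u(\pi_1)$'' is not correct in the reduced problem: running the optimal strategy of $ST(\mu,c(1-\gamma))$ here costs $u^{\mu,c(1-\gamma)}(\pi_1)+c\gamma\,\E[\tau_1]>u(\pi_1)$ whenever $\E[\tau_1]>0$, because the reduced problem charges running cost $c$, not $c(1-\gamma)$. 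What is actually needed (and what the paper supplies in one line) is that $u(\Pi^1_t)+ct$ is a submartingale: since $u=u^{\mu,c(1-\gamma)}$ is the value function of the problem with the \emph{smaller} running cost $c(1-\gamma)\leq c$, one has $\mathcal{L}u+c(1-\gamma)\geq 0$, hence $\mathcal{L}u+c\geq 0$, and optional sampling gives $\E[u(\Pi^1_\tau)+c\tau]\geq u(\pi_1)$ for every $\tau$, i.e.\ immediate stopping is optimal and $(\pi_1,0)\in\D$. Without this submartingale (or an equivalent comparison-of-costs) argument, the lower bound $V(\pi_1,0)\geq g(\pi_1,0)$ is unproved and the whole interval structure has nothing to anchor to.
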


\begin{proof}
We first claim that $[0,1]\times\{0\}\subseteq\D$. To see this, note that $g(\pi_1,0)=u(\pi_1)$ and that 
$u(\Pi^1_t) + ct$ is a submartingale. It follows that $V(\pi_1,0)=u(\pi_1)$.

Next, the fact that $g$ is affine in $\pi_2$ on $T$ together with unilateral concavity of $V$ give the existence of $b$.
The upper semi-continuity of $b$ follows from continuity of $V$, and the symmetry of $b$ follows from the symmetric set-up.
\end{proof}

\begin{figure}[htp!]\centering
  \subfigure[]{\includegraphics[width=0.3\textwidth]{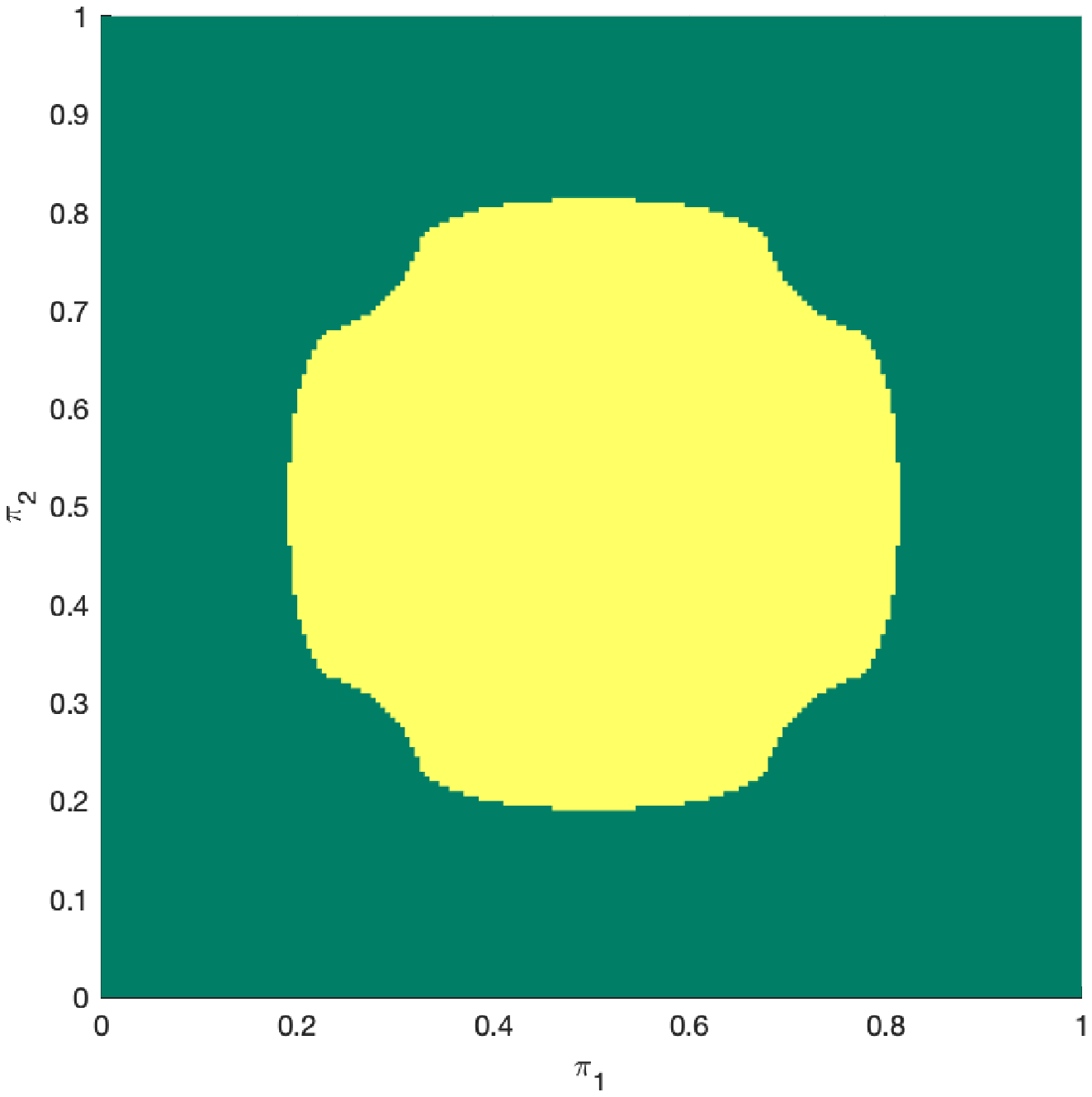}\label{figureST3-1}}
  \subfigure[]{\includegraphics[width=0.3\textwidth]{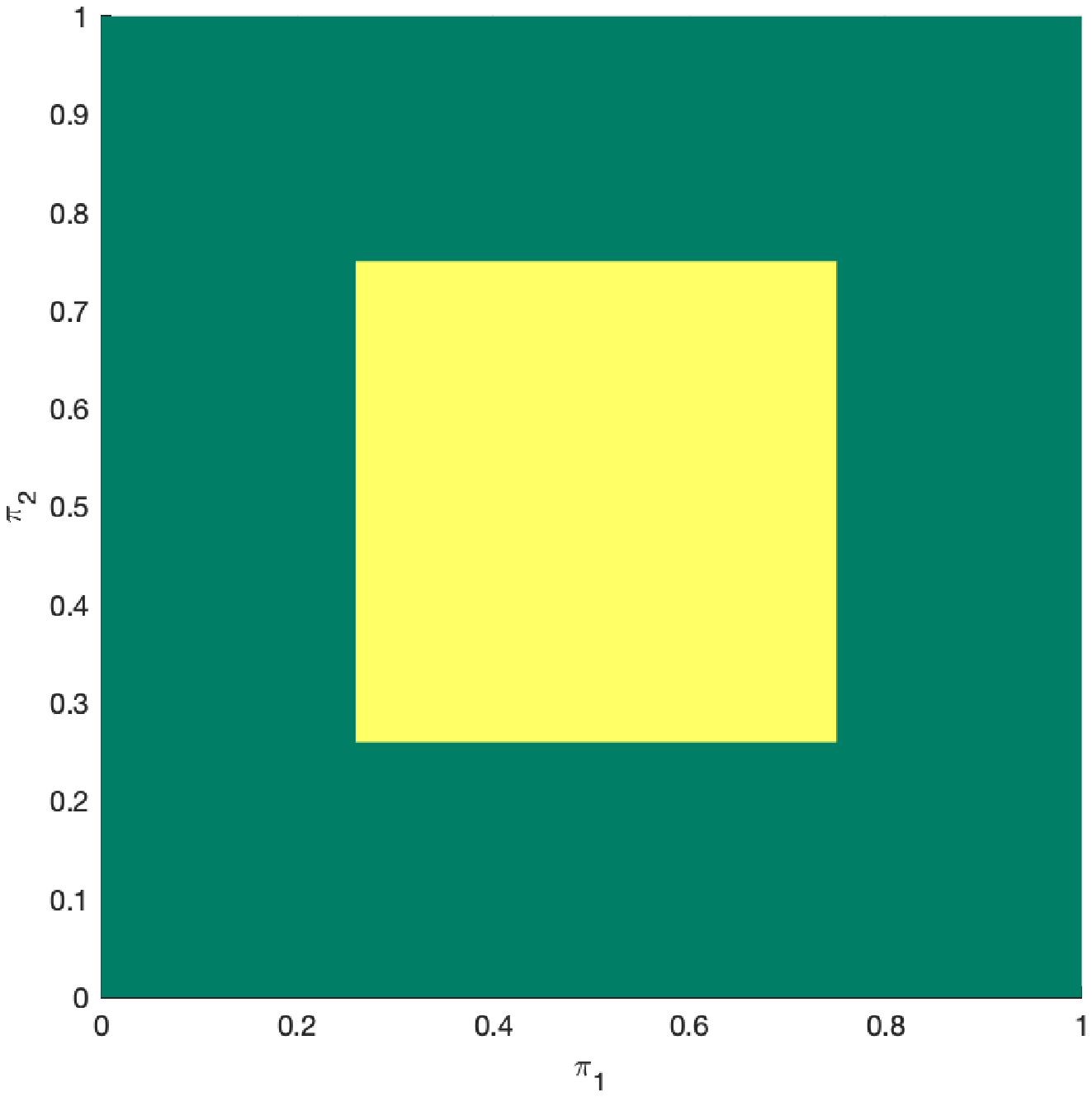}\label{figureST3-2}}
\subfigure[]{\includegraphics[width=0.3\textwidth]{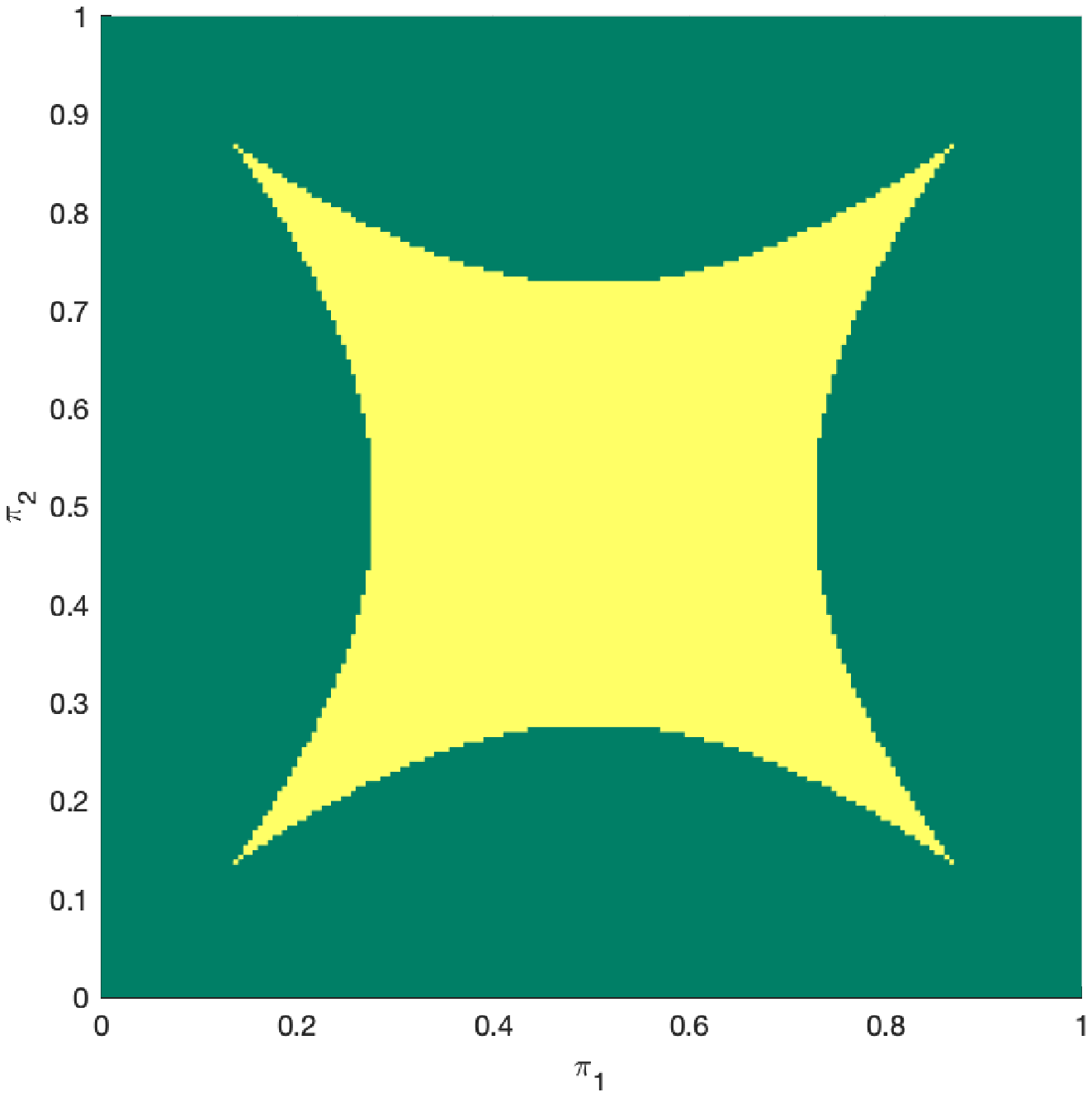}\label{figureST3-3}}
\caption{Continuation regions (yellow) in (ST3) for different values of the cost reduction parameter $\gamma$.
The parameters are $\mu_1=\mu_2=1$ and $c=0.2$; the cost reduction parameter is $\gamma=0.2$, $\gamma=0.5$ and $\gamma=0.8$, respectively.}
\end{figure}

\begin{remark}
A few estimates on the stopping/continuation regions in (ST3) are readily obtained. Let $(A^{\mu,c},1-A^{\mu,c})$ be the continuation 
region in $ST(\mu,c)$. 

If $\frac{1}{2} \leq \gamma < 1$, then 
\[u^{\mu,(1-\gamma)c}(\pi_1)+u^{\mu,(1-\gamma)c}(\pi_2)\leq V\leq g.\]
Consequently, the continuation region is contained in the square
$(A^{\mu,(1-\gamma)c}, 1-A^{\mu,(1-\gamma)c})^2$.

If $0< \gamma \leq \frac{1}{2}$, then 
\[u^{\mu,c/2}(\pi_1) + u^{\mu,c/2}(\pi_2)\leq V\leq u^{\mu,(1-\gamma)c}(\pi_1)+u^{\mu,(1-\gamma)c}(\pi_2).\]
Thus \[(A^{\mu,(1-\gamma)c},1-A^{\mu,(1-\gamma)c})^2\subseteq \C\subseteq \{\pi:\pi_1\mbox{ or }\pi_2\in (A^{\mu,c/2},1-A^{\mu,c/2})\}.\]
In particular, if $\gamma=1/2$, then $V(\pi)=u^{\mu,c/2}(\pi_1) +u^{\mu,c/2}(\pi_2)$ and $\C=(A^{\mu,c/2},1-A^{\mu,c/2})^2$.

\end{remark}

\section{Quickest detection problems}
\label{5}

In this section we provide structural results for the multi-dimensional quickest detection problems (QD1)-(QD3). For the sake of graphical illustrations, we present the results for $n=2$.

\subsection{(QD1)}
Consider the stopping problem
\[V(\pi) = \inf_{\tau} \E_\pi\left[g(\Pi_{\tau})+\int_0^{\tau}h(\Pi_s)ds  \right]\]
where $g(\pi ) = (1-\pi_1)(1-\pi_2)$ and $h(\pi) = c(1-(1-\pi_1)(1-\pi_2))$.
%Let $[0,B^*_i)$ be the continuation region for the one dimensional problem $QD(\mu_i,\lambda_i,c)$, $i=1,2$.

\begin{proposition}
There exists a non-increasing lower semi-continuous function $b: [0, 1]\to [0, 1]$ such that 
\begin{equation}
\label{C}
\C = \{\pi\in[0,1]^2: \pi_2<b(\pi_1)\}.
\end{equation}
\end{proposition}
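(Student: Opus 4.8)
The plan is to first use the unilateral concavity of $V$ together with the explicit form of $g$ to produce the graph $b$, then deduce lower semicontinuity from openness of $\C$, and finally establish monotonicity by means of the change of measure in Proposition~\ref{prop}. The reduction in Proposition~\ref{prop} is what I expect to be the crux, so I describe it last.

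First I would record that the top edge lies in the stopping region: since $g(\pi_1,1)=(1-\pi_1)(1-1)=0$ and $0\le V\le g$, we get $V(\pi_1,1)=g(\pi_1,1)$, so $(\pi_1,1)\in\D$; the same reasoning shows $\{\pi_1=1\}\subseteq\D$, whence $\C\subseteq[0,1)^2$. Now fix $\pi_1\in[0,1)$ and look at the fiber $\pi_2\mapsto V(\pi_1,\pi_2)$. On it $g(\pi_1,\cdot)$ is affine while $V(\pi_1,\cdot)$ is concave, so $D(\pi_2):=g(\pi_1,\pi_2)-V(\pi_1,\pi_2)$ is convex and nonnegative. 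Its zero set, which is exactly $\D$ intersected with the fiber, is therefore a closed interval; since it contains $\pi_2=1$, it equals $[b(\pi_1),1]$ with $b(\pi_1):=\min\{\pi_2:(\pi_1,\pi_2)\in\D\}$. Hence the continuation fiber is $[0,b(\pi_1))=\{\pi_2<b(\pi_1)\}$, which gives \eqref{C} with a function $b:[0,1]\to[0,1]$.

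For lower semicontinuity I would use that $\C=\{V<g\}$ is relatively open, $V$ and $g$ being continuous. A set of the form $\{\pi_2<b(\pi_1)\}$ is open precisely when $b$ is lower semicontinuous: given $\pi_2<b(\pi_1)$, openness of $\C$ puts a whole neighbourhood into $\C$, so $b(\pi_1')>\pi_2$ for $\pi_1'$ near $\pi_1$, and therefore $\liminf_{\pi_1'\to\pi_1}b(\pi_1')\ge b(\pi_1)$.

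The hard part will be monotonicity, because the naive pathwise comparison fails: increasing $\pi_1$ increases $\Pi^1$, which lowers the terminal cost $g$ but raises the running cost $h$, so the net effect on $V$ is a priori ambiguous. I would resolve this with Proposition~\ref{prop}. Since $g(\pi)=(1-\pi_1)(1-\pi_2)$ and $1+\Phi^i_t=1/(1-\Pi^i_t)$, the terminal factor collapses, $\prod_i(1+\Phi^i_\tau)g(\Pi_\tau)=1$, while $\prod_i(1+\Phi^i_t)h(\Pi_t)=c\big(\prod_i(1+\Phi^i_t)-1\big)$, so that, writing $W(\pi):=V(\pi)/\big((1-\pi_1)(1-\pi_2)\big)$,
\[
W(\pi)=\inf_{\tau}\tilde\E\left[e^{-\lambda\tau}+c\int_0^\tau e^{-\lambda t}\big((1+\Phi^1_t)(1+\Phi^2_t)-1\big)\,dt\right].
\]
By the explicit formula \eqref{expsolphi}, for each fixed path of $X$ the process $\Phi^i_t$ is increasing in $\phi_i=\pi_i/(1-\pi_i)$ and does not depend on $\phi_j$ for $j\ne i$; hence the integrand is pathwise non-decreasing in $\pi_1$ and in $\pi_2$, while the terminal term is independent of $\pi$. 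Taking expectations and then the infimum over the $\pi$-independent family of $\F^X$-stopping times shows that $W$ is non-decreasing in each variable. Since $\C=\{W<1\}$ on $[0,1)^2$, the region $\C$ is a lower set: if $(\pi_1,\pi_2)\in\C$ and $\pi_1'\le\pi_1$, then $W(\pi_1',\pi_2)\le W(\pi_1,\pi_2)<1$, so $(\pi_1',\pi_2)\in\C$. Feeding this into \eqref{C} yields $b(\pi_1')\ge b(\pi_1)$ whenever $\pi_1'\le\pi_1$, i.e. $b$ is non-increasing. The only delicate step is this measure-change reduction, which is exactly what converts the competing monotonicities of $g$ and $h$ into a single monotone running cost.
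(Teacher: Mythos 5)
Your proof is correct, and the first two steps (the fiber structure of $\D$ via concavity of $V(\pi_1,\cdot)$ against the affine $g(\pi_1,\cdot)$, and lower semi-continuity of $b$ from openness of $\C$) coincide with the paper's argument. Where you diverge is the monotonicity of $b$. The paper gets it almost for free: since the right edge $\{\pi_1=1\}$ also lies in $\D$ and $g$ is affine in $\pi_1$ as well, the same convexity argument you applied to vertical fibers applies to horizontal ones, so $g-V>0$ at $(\pi_1,\pi_2)$ forces $g-V>0$ on $[0,\pi_1]\times\{\pi_2\}$; hence $\C$ is a lower set and $b$ is non-increasing. In other words, the step you flag as "the hard part" is handled by the tool you had already deployed, just in the other coordinate direction --- the "competing monotonicities of $g$ and $h$" never need to be disentangled. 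Your alternative, passing through Proposition~\ref{prop} to write $W=V/\big((1-\pi_1)(1-\pi_2)\big)$ as an optimal stopping problem with constant terminal payoff and a running cost that is pathwise non-decreasing in each $\phi_i$ by \eqref{expsolphi}, is also valid (the family of $\F^X$-stopping rules is $\pi$-independent under $\tilde\P$, so the infimum preserves monotonicity), and it buys something the paper's proof does not: the genuinely stronger statement that $V/g$ is non-decreasing in each coordinate, from which both the fiber structure and the monotonicity of $b$ follow without invoking the concavity theorem at all. The trade-off is length and the reliance on the explicit solution of the $\Phi$-equation; the paper's route is two lines given the concavity result.
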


\begin{proof}
We first note that boundary points $(\pi_1,1)$ and $(1,\pi_2)$ are stopping points since $V= g=0$ 
at such points. Consequently, by unilateral concavity it follows that if a point $(\pi_1,\pi_2)\in\C$, then also 
$[0,\pi_1]\times[0,\pi_2]\subseteq\C$. The existence of a non-increasing function $b: [0, 1]\to [0, 1]$ such that \eqref{C} holds
thus follows; the lower semi-continuity of $b$ is a direct consequence of the continuity of $V$.
\end{proof}

For a graphical illustration, see Figure~\ref{QD1}.

\subsection{(QD2)}

We now study the stopping problem \eqref{V} with $g(\pi)=1-\pi_1\pi_2$ and $h(\pi)=c\pi_1\pi_2$.
Let $[0,B^*_i)$ be the continuation region for the one dimensional problem $QD(\mu_i,\lambda_i,c)$, $i=1,2$.

\begin{proposition}
There exists a non-increasing lower semi-continuous function $b:[B^*_1,1]\to[B^*_2,1]$ such that 
\[\C=\{\pi\in[0,1]^2:\pi_2<b(\pi_1)\}\cup [0,B^*_1)\times[0,1].\]
\end{proposition}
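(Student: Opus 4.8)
The plan is to determine the stopping region in (QD2) by combining three ingredients: the behavior of $V$ on the coordinate axes (which reduces to the one-dimensional detection problem), the unilateral concavity of $V$, and the structure of the penalty functions $g(\pi)=1-\pi_1\pi_2$ and $h(\pi)=c\pi_1\pi_2$. First I would examine the boundary $\pi_2=0$. There $g(\pi_1,0)=1$ and $h(\pi_1,0)=0$, so along $\{\pi_2=0\}$ there is no running cost and the terminal penalty is identically $1$, the maximal value. Since stopping immediately costs $g=1$ while $V\le 1$ always, the whole edge $[0,1]\times\{0\}$ should in fact lie in the stopping region trivially; more importantly, I would show that the entire strip $[0,B_1^*)\times[0,1]$ lies in $\C$. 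To do this, fix $\pi_1<B_1^*$ and use the one-dimensional detection problem $QD(\mu_1,\lambda_1,c)$: by dropping the dependence on the second coordinate (bounding $\pi_2\le 1$ in $g$ and $\pi_2\ge 0$ in $h$ appropriately, or testing the one-dimensional optimal stopping time for coordinate $1$), one gets $V(\pi)<g(\pi)$, placing $\pi$ in $\C$.

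Next I would treat the complementary region $\pi_1\ge B_1^*$ and construct the boundary $b$. The key geometric observation is that $g$ and $h$ have the right monotonicity and curvature to make the stopping region a ``lower-left-complement'' set in this region. Specifically, $g(\pi)=1-\pi_1\pi_2$ is affine in $\pi_2$ for each fixed $\pi_1$, so unilateral concavity of $V$ in $\pi_2$ forces the set $\{\pi_2:V(\pi)=g(\pi)\}$ to be an up-set (an interval of the form $[b(\pi_1),1]$) for each fixed $\pi_1$: once $V$ touches the affine function $g$ from below, concavity prevents it from separating again as $\pi_2$ increases. This yields, for each $\pi_1\in[B_1^*,1]$, a threshold $b(\pi_1)$ with $\C=\{\pi_2<b(\pi_1)\}$ on that vertical line, and symmetry in the two coordinates gives $b(\pi_1)\ge B_2^*$ from the analogous one-dimensional comparison in the second coordinate. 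The lower semi-continuity of $b$ follows from continuity of $V$ exactly as in the proofs of the earlier propositions (continuity makes $\C$ open, hence $\{(\pi_1,\pi_2):\pi_2<b(\pi_1)\}$ open, forcing $b$ lower semi-continuous).

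For the monotonicity of $b$ I would argue that $b$ is non-increasing by a comparison between vertical lines, leveraging the product structure. The natural approach is to show that if $(\pi_1,\pi_2)\in\D$ with $\pi_1\ge B_1^*$, then $(\pi_1',\pi_2)\in\D$ for every $\pi_1'\ge\pi_1$; this is where I would use unilateral concavity in $\pi_1$ together with the fact that the gain from waiting decreases as $\pi_1$ increases toward $1$ (since $g\to 0$ and the running cost $h$ increases). Concretely, one shows that stopping becomes ``more attractive'' as a coordinate grows, so the stopping region is closed under increasing either coordinate within $\{\pi_1\ge B_1^*\}$, which is exactly the statement that $b$ is non-increasing. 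Symmetric roles of the coordinates then give $b:[B_1^*,1]\to[B_2^*,1]$.

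The main obstacle I anticipate is establishing the monotonicity of $b$ rigorously, i.e. the claim that the stopping region is an up-set in each coordinate on the relevant region. Unilateral concavity alone controls behavior along a single line but does not immediately compare different lines; bridging lines requires a genuine two-dimensional comparison argument. I would expect to need either a coupling of the two-dimensional diffusion $\Pi$ started from ordered initial points (using the comparison principle coordinate-wise, as in the Lipschitz proof) combined with the monotonicity of $g$ and $h$ in each coordinate, or a direct argument that $V(\pi)-g(\pi)$ is monotone along the diagonal directions. The delicate point is that increasing $\pi_1$ raises the running cost $h=c\pi_1\pi_2$ but also lowers the terminal cost $g=1-\pi_1\pi_2$, so one must verify that the net effect pushes $\pi$ into $\D$; I would make this precise by comparing the optimal stopping time at the smaller point against its performance at the larger point and exploiting that $g$ is decreasing while the accumulated running penalty only increases. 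This comparison, rather than the existence or semi-continuity of $b$, is the real content of the proof.
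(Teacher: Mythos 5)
Your overall strategy (sectional concavity against the affine-in-$\pi_2$ function $g$, plus one-dimensional comparisons) is the right shape, but several steps as written do not go through. First, your claim that $[0,1]\times\{0\}$ lies in the stopping region is false and contradicts your own later assertion that $[0,B_1^*)\times[0,1]\subseteq\C$: the inequality $V\le 1=g$ on that edge does not give $V=g$, and in fact $\{\pi_2<B_2^*\}\subseteq\C$, so the entire edge is a continuation set. Second, your sectional argument needs an anchor: a concave function can touch an affine majorant at an interior point and separate again on both sides, so ``once $V$ touches $g$ it cannot separate as $\pi_2$ increases'' is not a consequence of concavity alone. The paper first observes that $[B_1^*,1]\times\{1\}\subseteq\D$ (the edge $\pi_2=1$ reduces to $QD(\mu_1,\lambda_1,c)$), and only then does concavity of $V(\pi_1,\cdot)$ against the affine $g(\pi_1,\cdot)$ force the contact set to be an interval of the form $[b(\pi_1),1]$.

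The two substantive gaps are the strip inclusion and the monotonicity. For the strip, testing the one-dimensional optimal stopping time $\tau_1$ for coordinate $1$ and crudely bounding the dependence on $\pi_2$ fails when $\pi_2$ is small: the resulting estimate requires $\pi_2\bigl(\E[\Pi^1_{\tau_1}]-\pi_1\bigr)>c\,\E\bigl[\int_0^{\tau_1}\Pi^1_t\,dt\bigr]$, whose right-hand side is a fixed positive number, so the inequality cannot hold as $\pi_2\to 0$. The paper instead constructs the genuinely two-dimensional supersolution $H(\pi_1,\pi_2)=\pi_1 u_2(\pi_2)+1-\pi_1$ (and its mirror image for the strip $\{\pi_1<B_1^*\}$), verifies $\mathcal L H+h\le 0$ off the relevant one-dimensional stopping set and $H=g$ on it, and concludes $V\le H\le g$ with the appropriate strict inequalities by a supermartingale argument; this is the content your proposal is missing. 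Finally, the monotonicity of $b$ does not require the coupling or two-dimensional comparison you anticipate: by symmetry the same concavity argument applies to horizontal sections (anchored at $\{1\}\times[B_2^*,1]\subseteq\D$), so both the vertical and horizontal sections of $\D$ are intervals reaching the far edge, and $\D$ is therefore closed under increasing either coordinate, which is exactly the statement that $b$ is non-increasing. You correctly flagged this step as the crux but left it unresolved, whereas the symmetry observation settles it in one line.
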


\begin{figure}[h]
  \subfigure[]{\includegraphics[width=0.3\textwidth]{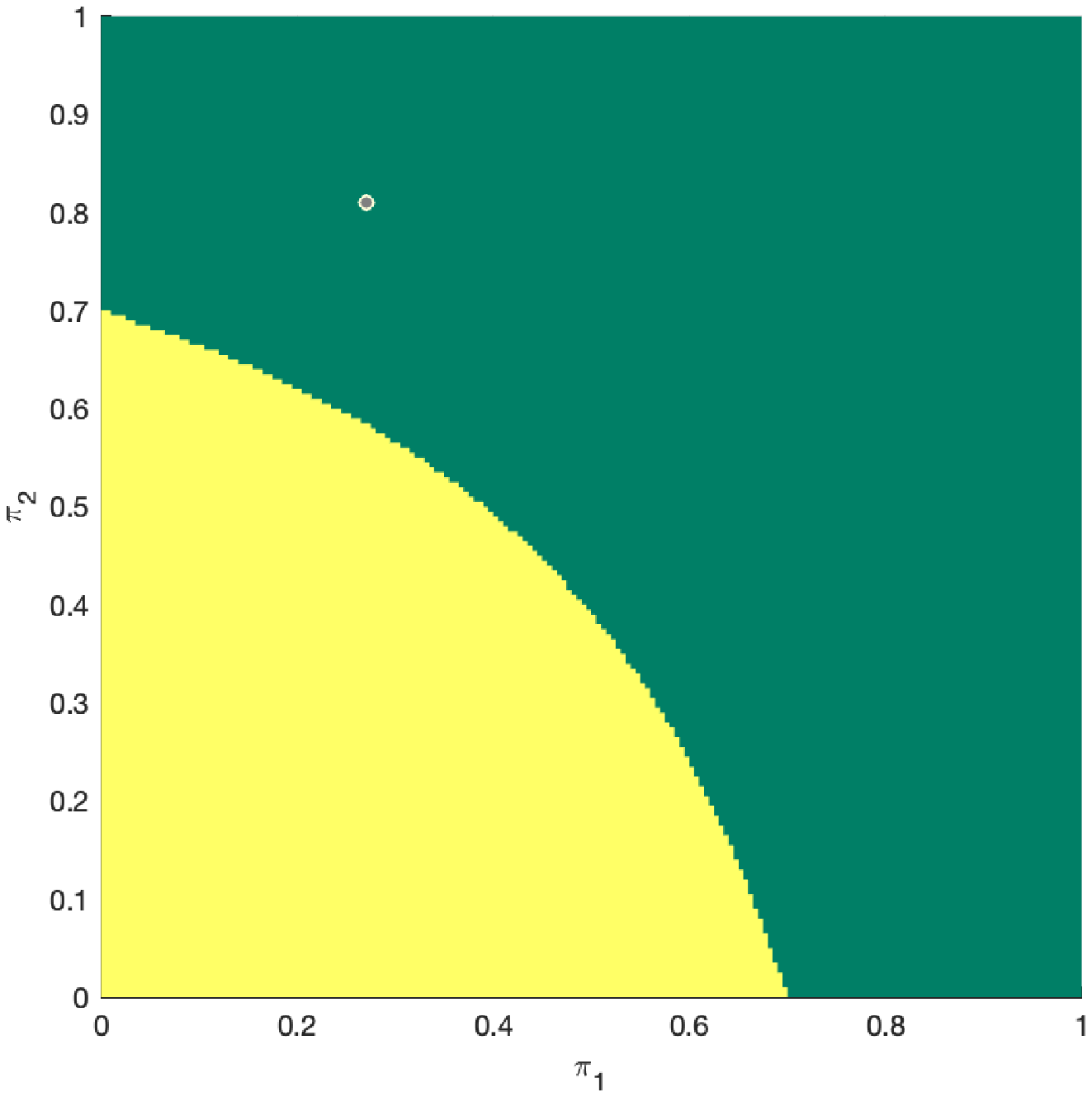}\label{QD1}}
  \subfigure[]{\includegraphics[width=0.3\textwidth]{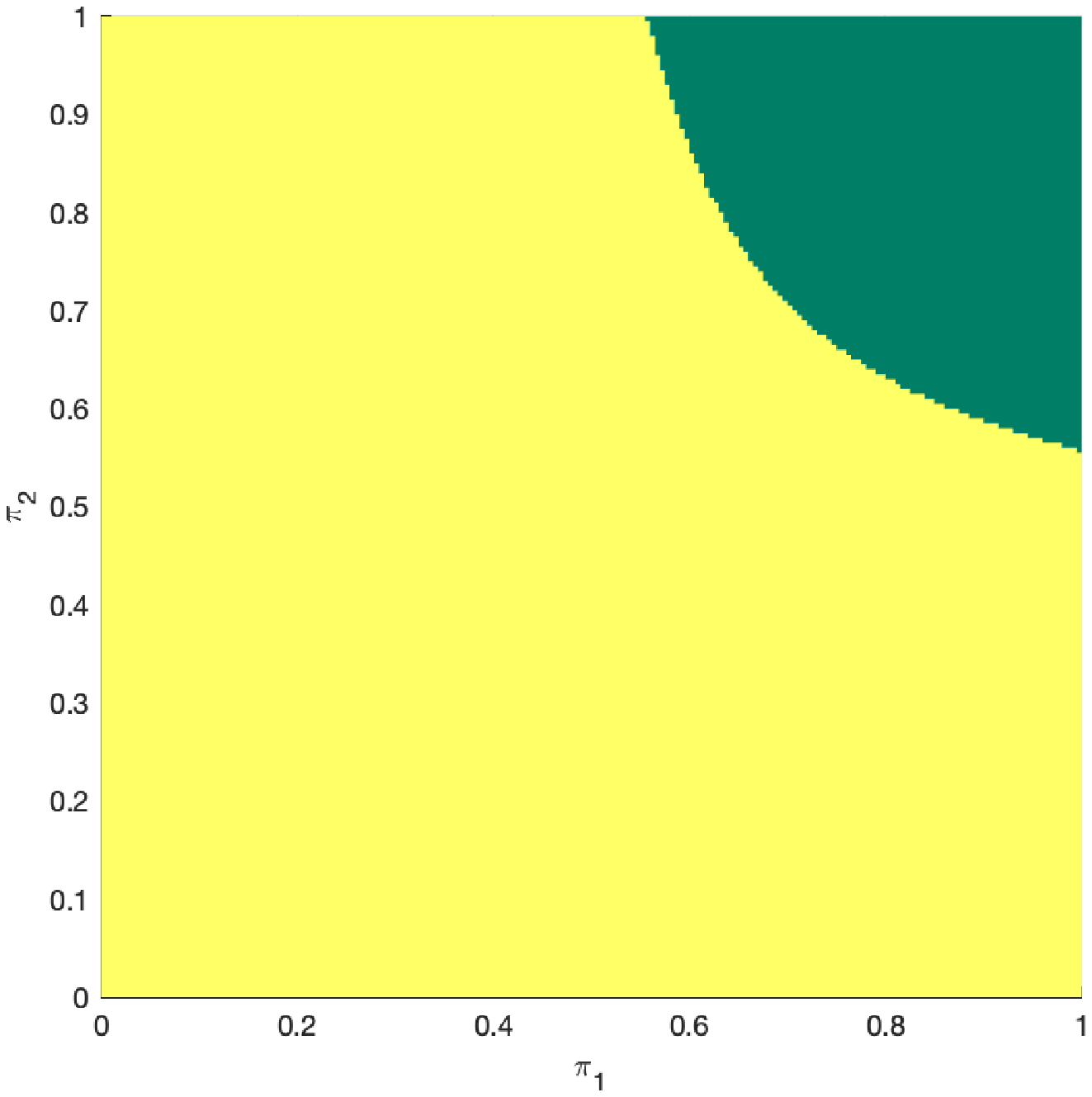}\label{QD2}}
 \subfigure[]{\includegraphics[width=0.3\textwidth]{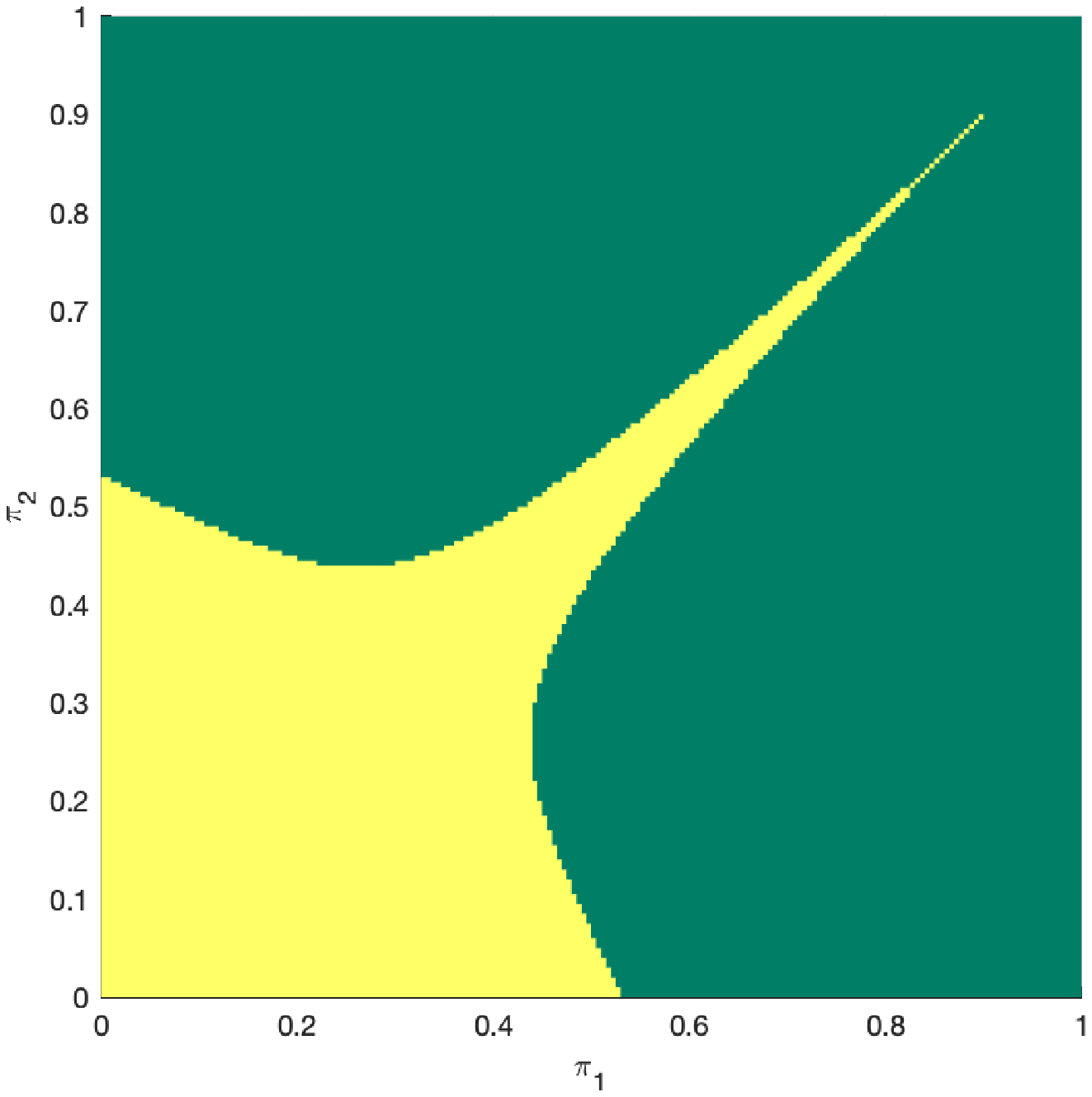}\label{QD3}}
\caption{Continuation regions (yellow) in (QD1) on the left, (QD2) in the middle and in (QD3) on the right.
The parameters are $\mu_1=\mu_2=1$ and $c=1$.}
\end{figure}

\begin{proof}
We first note that the line segment $[B^*_1,1]\times\{1\}$ belongs to the stopping region and that $g$ is affine in the $\pi_2$-direction. Consequently, concavity yields the existence of a function $b:[B^*_1,1]\to[0,1]$ so that
$\C\cap\{\pi_1\geq B^*_1\} =\{\pi_1\geq B^*_1,\pi_2<b(\pi_1\}$.
%$\{\pi_1\}\times [0,b(\pi_1)) \subseteq\C$ and $[b(\pi_1),1]\times \{\pi_1\}\subseteq\D$ for any $\pi_1\in[B^*_1,1]$. 

We now claim that $\{\pi_2<B^*_2\}\subseteq\C$. To see this, let 
\[H(\pi_1,\pi_2):=\pi_1 u_2(\pi_2) + 1-\pi_1\]
where $u_2:=u^{\mu_2,\lambda_2,c}$ is the value function in $QD(\mu_2,\lambda_2,c)$.
Then $H=g$ if $\pi_2\geq B_2^*$, and 
\[\mathcal LH+ c\pi_1\pi_2 = -(1-u_2(\pi_2))\lambda_1(1-\pi_1)\leq 0\]
on $[0,1]\times [0,B_2^*)$, where the inequality is strict provided $\pi_1<1$.
Now consider the stopping time
$\tau_2:=\inf\{t\geq 0:\Pi^2_t\geq B_2^*\}$ which is optimal in $QD(\mu_2,\lambda_2,c)$.
Then
\begin{eqnarray*}
V(\pi_1,\pi_2) &\leq& \E_{\pi}\left[g(\Pi_{\tau_2}^1,\Pi_{\tau_2}^2) + c\int_0^{\tau_2}\Pi^1_t\Pi^2_t\,dt\right]\\
&=& \E\left[H(\Pi_{\tau_2}^1,\Pi_{\tau_2}^2) + c\int_0^{\tau_2}\Pi^1_t\Pi^2_t\,dt\right] \leq H(\pi_1,\pi_2)
\end{eqnarray*}
by supermartingality. Moreover, if $\pi_2<B_2^*$ and $\pi_1<1$, then the second inequality is strict. 
Thus, if $\pi_2<B_2^*$ then 
\[V(\pi_1,\pi_2)\leq H(\pi_1,\pi_2) \leq 1-\pi_1\pi_2=g(\pi_1,\pi_2),\]
where the first inequality is strict if $\pi_1<1$, and the second inequality is strict if $\pi_1>0$. Consequently,  
$\{\pi_2<B^*_2\}\subseteq\C$.

Using $\{\pi_2<B^*_2\}\subseteq\C$, it follows that
$b\geq B^*_2$; interchanging $\pi_1$ and $\pi_2$ shows that $\{\pi_1<B^*_1\}\subseteq\C$ and that $b$ is non-increasing. Finally, 
the continuity of $V$ implies lower semi-continuity of $b$.
\end{proof}

The continuation region in (QD2) is illustrated in Figure~\ref{QD2}.

\subsection{(QD3)}

Now assume that $g(\pi)=1-\pi_1\vee\pi_2$ and 
$h(\pi)=c(\pi_1+\pi_2)$.  
By symmetry, it suffices to describe the structure of the continuation region in $T:=\{\pi\in[0,1]^2:\pi_1\leq \pi_2\}$.

\begin{proposition}
There exists a function $b:[0,1]\to[0,1]$ with $b(\pi_1)\geq \pi_1$ such that 
\begin{equation}
\label{relat}
\C\cap T=\{\pi\in T:\pi_2<b(\pi_1)\}.
\end{equation}
Moreover, $b$ is lower semi-continuous and first non-increasing and then non-decreasing.
\end{proposition}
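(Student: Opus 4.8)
The plan is to work entirely inside the triangle $T=\{\pi_1\le\pi_2\}$, where $g(\pi)=1-\pi_2$ is affine in $\pi_2$, and to extract the structure of $\C\cap T$ from unilateral concavity plus a few explicit supermartingale/submartingale comparisons. First I would establish the existence of $b$ with the representation \eqref{relat}. On $T$ the payoff $g$ is affine (indeed has slope $-1$) in the $\pi_2$-direction, while $V(\pi)\le g(\pi)$ always and $V$ is concave in $\pi_2$ by the unilateral concavity theorem. For fixed $\pi_1$, the set of $\pi_2$ on which $V<g$ is therefore an interval; I would argue it is of the form $\{\pi_2<b(\pi_1)\}$ by checking that its upper endpoint within $T$ is the relevant stopping threshold, and that once $V=g$ is reached (moving up in $\pi_2$) one stays in $\D$. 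The lower semi-continuity of $b$ is then the standard consequence of continuity of $V$ (Theorem on Lipschitz continuity), which forces $\C$ to be open.

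Next I would prove the inequality $b(\pi_1)\ge\pi_1$, i.e. that the diagonal portion of $T$ lies in $\C$ (strictly below the top corner). The natural idea is to show that near the diagonal $\pi_1=\pi_2$ it is never optimal to stop immediately. Here I would use a local supermartingale comparison: evaluate $\mathcal Lg+h$ on $T$, where $\mathcal L$ is the generator of $\Pi$. On $T$ one has $g=1-\pi_2$, so $\mathcal Lg=-\lambda_2(1-\pi_2)$ and $\mathcal Lg+h=c(\pi_1+\pi_2)-\lambda_2(1-\pi_2)$; the region where this is negative is contained in $\C$, and I expect it to contain a neighbourhood of the diagonal for $\pi_1$ not too large, giving $b(\pi_1)>\pi_1$ there. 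For the boundary cases one can instead use the explicit comparison stopping times, mimicking the (QD2) argument: deploying the one-dimensional optimal rule for the coordinate carrying the larger posterior and checking by supermartingality that the resulting cost is strictly below $g$.

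The main obstacle, and the most delicate part, will be the monotonicity claim that $b$ is \emph{first non-increasing and then non-decreasing}. This is a shape statement rather than a mere inclusion, so it cannot come from a single comparison; I anticipate combining unilateral concavity in $\pi_1$ with the structure of $g$, which on $T$ does \emph{not} depend on $\pi_1$ (since $g=1-\pi_2$ there). Because the running cost $h=c(\pi_1+\pi_2)$ is increasing in $\pi_1$, increasing $\pi_1$ makes continuation more expensive, which should push the threshold down — explaining an initial non-increasing phase — whereas as $\pi_1$ approaches $\pi_2$ and then the constraint $\pi_1\le\pi_2$ forces the geometry of $T$ to bend the boundary back up, giving the eventual non-decreasing phase. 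Making this rigorous will require tracking how the value function changes as one slides along horizontal segments and exploiting concavity in $\pi_1$ to rule out a second change of monotonicity; I would try to identify a single turning point via a comparison of the marginal cost of delay in the two coordinates, and I expect pinning down that this turning point is unique to be where the real work lies.
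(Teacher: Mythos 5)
Your plan for the existence of $b$ and for its lower semi-continuity matches the paper's argument: on $T$ the payoff $g=1-\pi_2$ is affine in $\pi_2$, the top edge $[0,1]\times\{1\}$ lies in $\D$ because $g$ vanishes there, and unilateral concavity of $V$ in $\pi_2$ makes $g-V$ convex and nonnegative on each vertical section of $T$, so its zero set is an upper interval. Two issues remain. First, you misread the inequality $b(\pi_1)\geq\pi_1$: it does not assert that the diagonal lies in $\C$ (that would require the strict inequality $\pi_1<b(\pi_1)$, and it fails at the corner $(1,1)$, where $V=g=0$); it is simply the statement that the threshold on the vertical section $\{\pi_1\}\times[\pi_1,1]$ of $T$ is at least $\pi_1$, which is automatic from the construction. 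Your computation $\mathcal L g+h=c(\pi_1+\pi_2)-\lambda_2(1-\pi_2)$ on $T$ is correct, but it only places a neighbourhood of the diagonal near the origin inside $\C$, and none of this is needed for the proposition as stated.

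Second, and more seriously, you leave the monotonicity claim --- the heart of the proposition --- as ``real work'' to be done via a comparison of marginal costs of delay and a uniqueness argument for a turning point. No such analysis is needed, and the heuristic about $h$ being increasing in $\pi_1$ plays no role. The ingredients you already list suffice: on each horizontal section $\{(\pi_1,y):0\leq\pi_1\leq y\}$ of $T$ the payoff $g=1-y$ is constant, hence affine, in $\pi_1$, while $V$ is concave in $\pi_1$; therefore $g-V$ is convex in $\pi_1$ and the section $\D\cap T\cap\{\pi_2=y\}=\{g-V\leq 0\}$ is an interval. Since $b(\pi_1)\geq\pi_1$, this section coincides with the full sublevel set $\{b\leq y\}$, so every sublevel set of $b$ is an interval, i.e.\ $b$ is quasi-convex: first non-increasing and then non-decreasing. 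This one-line completion is exactly the paper's proof; as written, your plan stops short of it.
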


\begin{proof}
We first note that $g(\pi_1,1)=0$, so $[0,1]\times\{1\}\subseteq \D$, and that $g$ is affine in $\pi_2$ on $T$; concavity thus implies the existence of $b$ such that 
\eqref{relat} holds. Moreover, $g$ is affine also in $\pi_1$ on $T$, so horizontal sections of the stopping region inside $T$ are intervals.
Consequently, the function $b$ is first non-increasing 
and then non-decreasing. Lower semi-continuity follows from the continuity of $V$.
\end{proof}

%\begin{figure}[h]
%\label{figureQD3}
%\includegraphics[width=0.6 \textwidth]{QD3.eps}
%\caption{Continuation region (yellow) in (QD3). Parameters are $c = 1$, $\mu_1 = \mu_2 = 1$ and $\lambda_1 = \lambda_2 = 1$.}
%\end{figure}


\begin{thebibliography}{}

\bibitem{BDK}
Bayraktar, E., Dayanik, S. and Karatzas, I. Adaptive Poisson disorder problem. 
{\em Ann. Appl. Probab}. 16 (2006), no. 3, 1190-1261.

\bibitem{BP}
Bayraktar, E. an Poor, V.
Quickest detection of a minimum of two Poisson disorder times. 
{\em SIAM J. Control Optim.} 46 (2007), no. 1, 308-331.

\bibitem{DPS}
Dayanik, S., Poor, V., and Sezer, S. Multisource Bayesian sequential change detection.
{\em Stochastics} 80 (2008), no. 1, 19-50.

\bibitem{DA}
De Angelis, T., Federico, S. and Ferrari, G.
Optimal boundary surface for irreversible investment with stochastic costs. 
{\em Math. Oper. Res.} 42 (2017), no. 4, 1135-1161.

\bibitem{EJT}
Ekstr\"om, E., Janson, S. and Tysk, J. 
Superreplication of options on several underlying assets. 
{\em J. Appl. Probab}. 42 (2005), no. 1, 27-38.

\bibitem{EV}
Ekstr\"om, E. and Vaicenavicius, J. Bayesian sequential testing of the drift of a Brownian motion. 
{\em ESAIM Probab. Stat.} 19 (2015), 626-648.

\bibitem{EV2}
Ekstr\"om, E. and Vaicenavicius, J. 
Monotonicity and robustness in Wiener disorder detection. {\em Sequential Anal.} 38 (2019), no. 1, 57-68.

\bibitem{EJS}
El Karoui, N. Jeanblanc-Picqu\'{e}, M. and Shreve, S. 
Robustness of the Black and Scholes formula. 
{\em Math. Finance} 8 (1998), no. 2, 93-126.

\bibitem{EPZ}
Ernst, P. A., Peskir, G., Zhou, Q.  Optimal real-time detection of a drifting Brownian motion. 
{\em Ann. Appl. Probab.} 30 (2020), no. 3, 1032-1065.

%\bibitem{GP}
%Gapeev, P. and Peskir, G. 
%The Wiener sequential testing problem with finite horizon. 
%{\em Stoch. Stoch. Rep.} 76 (2004), no. 1, 59-75.

\bibitem{H}
Hobson, D. Volatility misspecification, option pricing and superreplication via coupling. 
{\em Ann. Appl. Probab.} 8 (1998), no. 1, 193-205.

\bibitem{JT}
Janson, S. and Tysk, J.
Volatility time and properties of option prices. 
{\em Ann. Appl. Probab.} 13 (2003), no. 3, 890-913.

\bibitem{JT2}
Janson, S. and Tysk, J. 
Preservation of convexity of solutions to parabolic equations. 
{\em J. Differential Equations} 206 (2004), no. 1, 182-226.


\bibitem{MS}
Muravlev, A.A. and Shiryaev, A.N. (2014). Two sided disorder problem for a Brownian motion in a Bayesian setting.
{\em Proc. Steklov Inst. Math.} 287 (2014), no. 1, 202-224.

\bibitem{PS}
Peskir, G. and Shiryaev, A. {\em Optimal stopping and free-boundary problems.} Lectures in Mathematics ETH Z\"urich. Birkh\"auser Verlag, Basel, 2006.

\bibitem{PS1}
Peskir, G. and Shiryaev, A.
Sequential testing problems for Poisson processes. 
{\em Ann. Statist.} 28 (2000), no. 3, 837-859.

\bibitem{PS2}
Peskir, G. and Shiryaev, A.
Solving the Poisson disorder problem. {\em Advances in finance and stochastics}, 295-312, Springer, Berlin, 2002. 

\bibitem{S}
Shiryaev, A. Two problems of sequential analysis.
{\em Cybernetics} 3 (1967), no. 2, 63-69 (1969).

%\bibitem{CU}
%Zhang, H., Rodosthenous, N. and Hadjiliadis, O. Robustness of the N-CUSUM stopping rule in a Wiener disorder problem.
%{\em Ann. Appl. Probab.} 25 (2015), no. 6, 3405-3433.

\bibitem{ZS}
Zhitlukhin, M., Shiryaev, A.N. (2011). A Bayesian sequential testing problem of three hypotheses for Brownian motion.
{\em Stat. Risk Model.} 28 (2011), no. 3, 227-249.
\end{thebibliography}
\end{document}